\documentclass[smallextended, draft, numbook, envcountsame]{svjour3}
\usepackage{appendix}
\usepackage{framed}
\usepackage{amsfonts}
\usepackage{latexsym}
\usepackage{lmodern}
\usepackage[T1]{fontenc}
\usepackage[latin9]{inputenc}
\usepackage[letterpaper]{geometry}
\geometry{verbose,tmargin=1in,bmargin=1in,lmargin=1in,rmargin=1in}
\usepackage{color}
\usepackage{multirow}
\usepackage{float}
\usepackage{array}
\usepackage[english]{babel}
\usepackage{verbatim}
\usepackage{prettyref}
\usepackage{amsmath}
\usepackage{amssymb}
\usepackage{stmaryrd}
\usepackage[unicode=true,pdfusetitle,
 bookmarks=true,bookmarksnumbered=false,bookmarksopen=false,
 breaklinks=false,pdfborder={0 0 1},backref=false,colorlinks=false]
 {hyperref}
\makeatletter

\oddsidemargin .0cm
\evensidemargin .0cm

\setlength{\textwidth}{6.6in}

\topmargin -1.2cm

\setlength{\headheight}{.5cm}

\setlength{\headsep}{0.5cm}

\setlength{\textheight}{23.5cm}

\setlength{\footskip}{0.4in}

\usepackage{amsmath}
\usepackage{amsfonts}
\usepackage{latexsym}
\usepackage{amssymb}
\usepackage{framed}

\newcommand{\beq}{\begin{equation}}
\newcommand{\eeq}{\end{equation}}
\newcommand{\beqa}{\begin{eqnarray}}
\newcommand{\eeqa}{\end{eqnarray}}
\newcommand{\beqas}{\begin{eqnarray*}}
    \newcommand{\eeqas}{\end{eqnarray*}}
\newcommand{\bi}{\begin{itemize}}
    \newcommand{\ei}{\end{itemize}}
\newcommand{\gap}{\hspace*{2em}}

\newcommand{\vgap}{\vspace{.1in}}

\setcounter{page}{1}

\newcommand{\R}{\mathbb{R}}

\newcommand{\lam}{{\lambda}}

\newcommand{\inner}[2]{\langle #1,#2\rangle}

\newcommand{\cl}{\mathrm{cl}\,}

\newcommand{\argmin}{\mathrm{argmin}}

\newcommand{\dom}{\mathrm{dom}\,}

\newcommand{\bConv}[1]{\mbox{\rm C}\overline{\mbox{\rm onv}}\,(\Re^{#1})}


\usepackage{enumitem}
\setlist[itemize]{leftmargin=5.5mm}
\setlength{\textfloatsep}{1em}
\setlength{\intextsep}{1em}

\hyphenation{op-ti-mi-za-tion}
\hyphenation{a-fore-men-tion-ed}
\hyphenation{st-raight-for-ward}
\hyphenation{com-po-site}
\usepackage[nocompress]{cite}
\DeclareMathOperator*{\trc}{tr}
\global\long\def\argmin{\operatorname*{arg\,min}}%
\definecolor{green}{rgb}{0,0.6,0.0} 


\makeatother

\usepackage[labelfont=bf]{caption}

\global\long\def\r{\Re}

\global\long\def\R{\Re}

\global\long\def\pt{\partial}

\global\long\def\n{\mathbb{N}}

\global\long\def\rn{\Re^{n}}

\global\long\def\bConv{\overline{\text{Conv}}(\rn)}

\global\long\def\lam{\lambda}

\providecommand{\tabularnewline}{\\}

\begin{document}

\title{An efficient adaptive accelerated inexact proximal point method for solving linearly constrained nonconvex composite problems}

\author{
    Weiwei Kong
    \and
    Jefferson G. Melo
    \and
    Renato D.C. Monteiro
}
\institute{
    Weiwei Kong \and Renato D.C. Monteiro \at
    School of Industrial and Systems Engineering, Georgia Institute of Technology, Atlanta, GA, 30332-0205. (\email{\tt{wkong37@gatech.edu} \& \tt{monteiro@isye.gatech.edu}\rm}). The works of these authors
    were partially supported by ONR Grant N00014-18-1-2077.
    \\\\
    Jefferson G. Melo \at
    Institute of Mathematics and Statistics, Federal University of Goias, Campus II- Caixa Postal 131, CEP 74001-970, Goi\^ania-GO, Brazil. (\email{\tt{jefferson@ufg.br}\rm}). The work of this author was
    supported in part by  CNPq Grant 406975/2016-7.
}
\date{\today}

\maketitle
\begin{abstract}
This paper proposes an efficient adaptive variant of a quadratic penalty accelerated inexact proximal point (QP-AIPP) method proposed earlier by the authors. Both the QP-AIPP method and its variant solve linearly set constrained nonconvex composite optimization problems using a quadratic penalty approach where the generated penalized subproblems are solved by a variant of the underlying AIPP method. The variant, in turn, solves a given penalized subproblem by generating a sequence of proximal subproblems which are then solved by an accelerated composite gradient algorithm. The main difference between AIPP and its variant is that the proximal subproblems in the former are  always convex while the ones in the latter are not necessarily convex due to the fact that their prox parameters are chosen as aggressively as possible so as to improve efficiency. The possibly nonconvex proximal subproblems generated by the AIPP variant are also tentatively solved by a novel adaptive accelerated composite gradient algorithm based on the validity of some key convergence inequalities. As a result, the variant generates  a sequence of proximal subproblems where the stepsizes are adaptively changed according to the responses obtained from the calls to the accelerated composite gradient algorithm. Finally, numerical results are given to demonstrate the efficiency of the proposed AIPP and QP-AIPP variants.

\end{abstract}
2000 Mathematics Subject Classification: 
  47J22, 90C26, 90C30, 90C60, 
  65K10.
\\
\\
Key words: quadratic penalty method, nonconvex program,
 iteration-complexity,  proximal point method, first-order accelerated methods.

\section{Introduction} \label{sec:intro}

This paper presents a computationally efficient variant of the quadratic penalty
accelerated inexact proximal point (QP-AIPP) method studied in \cite{WJRproxmet1}.

The QP-AIPP method of \cite{WJRproxmet1} is designed for solving the linearly--constrained nonconvex composite optimization problem
\begin{equation}\label{eq:probintroa}
\min \left\{ f(z) + h(z) : Az = b, \,  z \in \R^n \right \},
\end{equation}
where  $A:\r^n \mapsto \r^p$ is a linear operator, $b\in\R^p$, $h:\R^n \to (-\infty,\infty]$ is a closed proper convex function, 
and $f$ is a real-valued differentiable (possibly nonconvex) function whose gradient is $L$--Lipschitz and which, for some $0<m\leq L$, satisfies
\begin{equation}\label{eq:PenaltyProb-introa}
 f(u) \geq f(z)+\left\langle \nabla f(z),u-z\right\rangle -\frac{m}{2}\|u-z\|^{2} 
\quad \forall \, z,u \in \dom\, h.
\end{equation}
The QP-AIPP method solves \eqref{eq:probintroa} via a quadratic penalty method, i.e.,  a sequence of penalty subproblems of the form
\begin{align} \label{eq:pen_sub_intro}
\min \left\{f(z) + h(z) + \frac{c}{2} \|Az - b\|^2  : z \in \R^n \right \},
\end{align}
for an increasing sequence of positive penalty parameters $c$, is solved by the accelerated inexact proximal point (AIPP) method (discussed below) in which each penalty subproblem is solved using a common starting point $z_0 \in \dom h$ (i.e., a cold--start strategy is adopted).

We briefly outline the AIPP method of \cite{WJRproxmet1}. First, note that \eqref{eq:pen_sub_intro} is a special case of 
\begin{equation}\label{eq:PenProb2Introa}
\phi_* := \min \left\{ \phi(z):=g(z) + h(z)  : z \in \R^n \right \}
\end{equation}
where $g(z) := f(z) +  c \|Az-b\|^2/2$ is a function satisfying
\begin{equation} \label{eq:mMintro}
-\frac{m}{2}\|u-z\|^{2} \leq g(u)-\left[g(z)+\left\langle \nabla g(z),u-z\right\rangle \right] \leq  \frac{M}{2}\|u-z\|^{2}
\quad \forall \, z,u \in \dom\, h,
\end{equation}
where $M=L+c\|A\|^2$. In the general setting of \eqref{eq:PenProb2Introa}--\eqref{eq:mMintro},  the AIPP method generates a sequence $\{z_k\}$ using an inexact proximal point (IPP) framework
(see for  example \cite{Rock:ppa,hpe_svaiter99}), i.e., given $z_{k-1} \in \dom h$, it computes $z_{k}$
as a suitable approximate solution of the
proximal subproblem
 \begin{equation}\label{eq:penPbRegIntroa}
\min \left\{ g(z) + h(z) + \frac{1}{2\lam_k}\|z-z_{k-1}\|^2 : z \in \R^n \right \}
\end{equation}
for some prox-parameter $\lambda_k>0$.
Note that the first inequality in \eqref{eq:mMintro} implies that
the objective function of  \eqref{eq:penPbRegIntroa} is convex as long as
$\lambda_k$ is not larger than $1/m$.
The AIPP method sets $\lambda_k=1/(2m)$ for every $k$ and
uses an accelerated composite gradient (ACG) variant (see for example \cite{beck2009fast,MontSvaiter_fista,Nesterov1983})
to approximately solve \eqref{eq:penPbRegIntroa}.

Since the larger $\lam_k$ is the faster the above IPP framework converges to a desirable approximate solution,
the main goal of this paper is to develop
an  aggressive AIPP variant, and subsequently an aggressive QP-AIPP variant, which possibly chooses $\lambda_k$ substantially larger than $1/m$ despite potential
loss of convexity of \eqref{eq:penPbRegIntroa}.
An important ingredient in obtaining  this aggressive AIPP variant is the development of a relaxed ACG (R-ACG) algorithm that approximately solves \eqref{eq:penPbRegIntroa} according to a more relaxed termination criterion.
More specifically, within a reasonably number of  iterations, the algorithm:
(i) either solves the possibly nonconvex subproblem \eqref{eq:penPbRegIntroa} according to the relaxed criterion or stops with failure due to $\lam_k$ being too large; and (ii) always solves \eqref{eq:penPbRegIntroa} according to the relaxed criterion when its objective function is convex.
The aforementioned relaxed AIPP (R-AIPP) variant starts with a relatively large initial prox parameter and,
in each one of its steps, calls the R-ACG algorithm to solve the
corresponding prox subproblem.  If a key descent inequality fails, then the prox parameter $\lambda_k$ is halved, the prox center $z_{k-1}$ is maintained, and the R-ACG algorithm is invoked once again to solve the resulting prox subproblem; otherwise, the prox parameter $\lambda_k$ is preserved and $z_k$ takes the place of $z_{k-1}$.

This paper also considers a more general version of \eqref{eq:probintroa} in which the linear constraint $Az=b$ is replaced by the linear set constraint $Az\in S$, where $S\subseteq \r^p$ is a closed convex set. Clearly, when $S=\{b\}$, the more general problem reduces to \eqref{eq:probintroa}. Under the assumption that $\dom h$ is bounded and all penalty subproblems are solved by the AIPP variant using the aforementioned cold--start strategy, it turns out that the iteration complexity of the  QP-AIPP variant for finding the desired approximate solution is considerably worse than that of the QP-AIPP method of \cite{WJRproxmet1}. If, on the other hand, the QP-AIPP variant adopts the warm--start strategy in which
the R-AIPP method for solving the current penalty subproblem starts from the approximate solution found for
the previous subproblem, then the iteration complexity of this relaxed QP-AIPP (R-QP-AIPP) variant is shown to be the same as that of the QP-AIPP method of \cite{WJRproxmet1}, up to a logarithmic factor.



The proposed AIPP and QP-AIPP variants are compared with three state-of-the-art optimization methods on five different optimization problems. The computational results obtained show that
the variants can substantially outperform most of the competing methods on many problem instances.

{\it Related works.} 
We first discuss papers dealing with related algorithms for solving the convex version of \eqref{eq:probintroa} and other related monotone problems.
Iteration-complexity analysis of quadratic penalty methods for solving  \eqref{eq:probintroa} under the assumption that
$f$ is convex and  $h$  is a convex indicator function was first studied in \cite{LanRen2013PenMet} and further explored  in
\cite{Aybatpenalty,IterComplConicprog}. Iteration-complexity of first-order augmented Lagrangian methods for solving
the latter class of linearly constrained convex programs was studied in  \cite{AybatAugLag,LanMonteiroAugLag,ShiqiaMaAugLag16,zhaosongAugLag18,Patrascu2017,YangyangAugLag17}.
Inexact proximal point methods using  accelerated gradient algorithms to solve their prox-subproblems were previously considered in \cite{GlanPDaccel2014,YHe2,YheMoneiroNash,OliverMonteiro,MonteiroSvaiterAcceleration}
 in the setting of convex-concave saddle point problems and monotone variational inequalities.
 
We now discuss papers dealing with related algorithms for solving \eqref{eq:probintroa}
when $f$ is nonconvex and the assumptions mentioned after \eqref{eq:probintroa} hold.
Paper \cite{WJRproxmet1} is, up to our knowledge,  the first one to consider a proximal method with acceleration strategy for solving   \eqref{eq:probintroa}. Previous  works using acceleration strategies were concerned  with   the unconstrained problem \eqref{eq:PenProb2Introa}. Namely, \cite{nonconv_lan16}  proposed an accelerated gradient framework to solve \eqref{eq:PenProb2Introa} with better iteration complexity than the usual composite gradient method. Since then, many authors have proposed other accelerated frameworks for solving \eqref{eq:PenProb2Introa} under different assumptions on the functions $g$ and $h$
(see, for example, \cite{Aaronetal2017,Paquette2017,Ghadimi2019,Li_Lin2015,CatalystNC}).
In particular, by exploiting the lower curvature $m$,     \cite{Aaronetal2017,Paquette2017,CatalystNC} proposed some algorithms which improve the iteration-complexity bound of \cite{nonconv_lan16} in terms of the dependence on  the upper curvature $M$. Finally,  there has been a growing interest in the iteration complexity  of methods for solving optimization problems using  second order information (see, for example,  \cite{Aaronetal2017,MonteiroSvaiterNewton,NesterovSec_ord,CartToint}).

{\it Organization of the paper.} 
Subsection~\ref{sec:DefNot} provides some basic definitions and notation. Section~\ref{sec:background} begins with 
presenting some background materials 
and transitions into defining a general descent (GD) framework for solving the nonconvex 
optimization problem \eqref{eq:PenProb2Introa}.  Section~\ref{sec:Nesterov's-Method} presents and derives the complexity of an R-ACG algorithm
which attempts to solve \eqref{eq:penPbRegIntroa}  even when it is not convex.   Section~\ref{sec:AIPPmet} presents a relaxed variant 
of the AIPP method proposed in \cite{WJRproxmet1}. Section~\ref{sec:penalty} presents a relaxed variant of the QP-AIPP method proposed in \cite{WJRproxmet1}. Section~\ref{sec:numerical} presents numerical results to illustrate the efficiency of the  AIPP and QP-AIPP variants. Finally, Section~\ref{sec:concl_remarks} presents some concluding remarks.

\subsection{Basic definitions and notation \label{sec:DefNot}}

This subsection provides some basic definitions and notation used
in this paper.

The set of natural numbers is denoted by $\mathbb{N}$. The set of real numbers is denoted by $\Re$. The set of non-negative real numbers  and 
the set of positive real numbers are denoted by $\Re_+$ and $\Re_{++}$, respectively. Let $\Re^n$ denote a real valued $n$--dimension inner product space, whose inner product and its associated induced norm are denoted by $\left\langle \cdot,\cdot\right\rangle $
and $\|\cdot\|$, respectively. Let $\left\langle \cdot,\cdot\right\rangle_F $ denote the Frobenius inner product. Let $S_+^n$ denote the cone of positive semidefinite $n$--by--$n$ matrices. For $t>0$, define 
$\log^+_1(t):= \max\{\log t ,1\}$. The set of proper lower
semi-continuous convex functions defined on $\Re^n$
is denoted by $\overline{\text{Conv}}(\Re^n)$. Given a linear operator $A:\R^n \mapsto \R^p$, the operator norm of $A$ is denoted by $\|A\| := \sup\{\|Az\|/\|z\| : z\in \rn, z\neq 0\}$.

Let $\psi: \Re^n\rightarrow (-\infty,+\infty]$ be given. The effective domain of $\psi$ is denoted by
$\dom \psi:=\{x \in \Re^n: \psi (x) <\infty\}$ and $\psi$ is proper if $\dom \psi \ne \emptyset$.
If $\psi$ is differentiable at $\bar z \in \Re^n$, then its affine   approximation $\ell_\psi(\cdot;\bar z)$ at $\bar z$ is denoted by
\begin{equation}\label{eq:defell}
\ell_\psi(z;\bar z) :=  \psi(\bar z) + \inner{\nabla \psi(\bar z)}{z-\bar z} \quad \forall  z \in \Re^n.
\end{equation}
Also, for $\varepsilon \ge 0$,  its \emph{$\varepsilon$-subdifferential} at $z \in \dom \psi$ is denoted by
\begin{equation}\label{eq:epsubdiff}
\partial_\varepsilon \psi (z):=\left\{ v\in\Re^n: \psi(u)\geq \psi(z)+\left\langle v,u-z\right\rangle -\varepsilon,\forall u\in\Re^n\right\}.
\end{equation}
The subdifferential of $\psi$ at $z \in \dom \psi$, denoted by $\partial \psi (z)$, corresponds to  $\partial_0 \psi(z)$. 

For a given $X\subseteq \rn$, the closure of the set $X$ is denoted by $\cl X$,
 the indicator function of $X$, denoted by $\delta_X$, is defined as $\delta_X(x) = 0$ if $x\in X$ and $\delta_X(x)=\infty$ if $x\notin X$. Moreover, the normal cone of $X$ at a point $x\in X$ is denoted by
\begin{equation*}
N_{X}(x):=\{u\in\r^{n\times n}:\left\langle u,x'-x\right\rangle \leq0,\forall x'\in X\} = \pt \delta_X(x).
\end{equation*}

%


 
\section{A general descent framework}\label{sec:background}


As discussed in Section \ref{sec:intro}, all the penalized subproblems (see \eqref{eq:PenaltyProb-introa}) that arise
during  the execution of the QP-AIPP method, as well as the R-QP-AIPP method, are of the form \eqref{eq:PenProb2Introa}. Hence, efficiently obtaining a solution of \eqref{eq:PenProb2Introa} is of paramount importance for both the QP-AIPP and R-QP-AIPP methods. While the QP-AIPP method uses the AIPP method to solve \eqref{eq:PenProb2Introa}, the R-QP-AIPP method uses the R-AIPP method which will be discussed in Section~\ref{sec:AIPPmet}.
The discussion of this section (as well as Section~\ref{sec:Nesterov's-Method}) will essentially pave the way towards the presentation of the R-AIPP method.

More specifically, this section presents and analyzes a GD framework for solving \eqref{eq:PenProb2Introa} that makes use of  a black box (see step~1 of the GD framework below). In addition, it describes:
the assumptions and relevant quantities underlying problem \eqref{eq:PenProb2Introa}, the notion of approximate stationary point of \eqref{eq:PenProb2Introa} adopted in this section and Section~\ref{sec:AIPPmet}, and the relationship between the GD framework and the GIPP framework of \cite{WJRproxmet1}, of which the AIPP method is an instance of.



Our problem of interest in this section and Section \ref{sec:AIPPmet} is \eqref{eq:PenProb2Introa} which is assumed to
 satisfy the following assumptions:

\begin{itemize}[align=left]
    
\item[(A1)]$h \in \bConv$;
\item[(A2)]$g$ is a nonconvex differentiable function on $\dom h$ and there exist a scalar $M > 0$ such that
\begin{align}
  \label{eq:curvature}
  \|\nabla g(u) - \nabla g(z) \| \leq M \|u-z\| \quad \forall u,z\in \dom h;
\end{align}
\item[(A3)]$\phi_*>-\infty$.

\end{itemize}
In addition, the analysis in Section~\ref{sec:AIPPmet} makes use of the quantity
\begin{equation}\label{eq:m_lower_def}
\underline{m} := \inf \left\{m\in \r_{++}: g(u) \ge  \ell_g(u;z)  - \frac{m}{2}\|u-z\|^{2} \quad \forall u,z \in \dom h \right\}, 
\end{equation}
which is positive in view of assumption (A2).
While it is generally difficult to compute the above quantity, it is well known that assumption (A2) implies that $\underline{m} \in (0,M]$. Moreover, it is shown in Proposition~\ref{prop:relaxAIPPmethod} below that the smaller $\underline m$ is, the better the iteration complexity of R-AIPP method in Section~\ref{sec:AIPPmet} becomes.

It is well-known that a necessary condition for $z^*\in\dom h$ to be a local minimum of \eqref{eq:PenProb2Introa} is that $z^*$ be a stationary point of $\phi$, i.e.,
 $0\in\nabla g(z^*) + \partial h(z^*)$. A relaxation of this inclusion leads to the following definition of  an approximate stationary point of
 \eqref{eq:PenProb2Introa}: given a tolerance  $\hat\rho>0$, a pair $(\hat z, \hat v)$ is said to be a $\hat\rho$--approximate stationary point of \eqref{eq:PenProb2Introa} if 
\begin{equation}
\hat v\in\nabla g(\hat z)+\partial h(\hat z),\qquad\|\hat v\|\leq\hat{\rho} \label{eq:approx_subgrad}.
\end{equation}
Given a general quadruple $(\lam,z^-,z, v)\in\R_{++}\times\rn \times\dom h \times\R^n$,
the following simple refinement procedure shows how to obtain a pair
$(\hat z,\hat v)$ satisfying the inclusion in  \eqref {eq:approx_subgrad}
with a technically useful bound
on the residual $\hat v$ (see Proposition \ref{prop:approxsol} below).


\vspace*{0.5em}

\noindent\begin{minipage}[t]{1\columnwidth}
\noindent\rule[0.5ex]{1\columnwidth}{1pt}

\noindent \textbf{Refinement procedure.}

\noindent\rule[0.5ex]{1\columnwidth}{1pt}
\end{minipage}

\vspace*{0.5em}

\noindent {\bf Input:} a scalar $M>0$, a pair of functions $(g,h)$ satisfying assumptions (A1) and (A2), and a quadruple $(\lam,z^-,z, v)\in\R_{++}\times\R^n\times\dom h \times\R^n$; 

\vspace*{1em}

\noindent {\bf Output:} a triple $(\hat z,\hat v, \Delta) \in \dom h \times \rn \times \r_{++}$ satisfying \eqref{eq:inclv'};

\begin{itemize}

\item[(0)] set 
    \begin{equation}\label{eq:def_f}
    M_\lam:=\lambda M+1,
    \quad 
    f_\lam:=\lambda g +\frac{1}{2} \|\cdot-z^-\|^2-\langle v, \cdot \rangle, \quad h_{\lam} := \lam h;
    \end{equation}
\item[(1)]
    compute
    \begin{align}
    &\hat  z := \argmin_u \left\{ \inner{\nabla f_\lam(z)}{u-z} + \frac{M_\lambda}2 \|u-z\|^2 + h_\lam(u) \right \}, \label{eq:z_pRefProc} \\
    & \hat v:= \frac{1}\lam \left[( v+ z^--z ) +M_{\lam}(z-\hat z) \right] +  \nabla g(\hat z)-\nabla g(z), \label{eq:ref_vp} \\
   & \Delta:= (f_\lam+h_\lam)(z) - (f_\lam+h_\lam)(\hat z); \label{eq:ref_var}
    \end{align}
\item[(2)] return the triple $(\hat z,\hat v,\Delta)$.
\end{itemize}
\rule[0.5ex]{1\columnwidth}{1pt}

For the sake of brevity, we write
$(\hat z,\hat v,\Delta)=RP(\lam,z^-,z, v)$ to indicate that the triple $(\hat z,\hat v,\Delta)$
is the output
of the above refinement procedure with inputs $M$, $(g,h)$, and $(\lam,z^-,z, v)$. We now state an important property of this procedure, whose proof can be found in Appendix A.

\begin{proposition}\label{prop:approxsol}
    Let a pair of functions $(g,h)$ satisfying (A1)--(A3) and a quadruple
    $(\lam,z^-,z, v)\in\R_{++}\times\R^n\times\dom h \times\R^n$ be given and let
    $(\hat z,\hat v,\Delta) = RP(\lam,z^-,z,v)$.
    Then, $\Delta \geq 0$ and
    \begin{equation}\label{eq:inclv'}
    \hat v \in \nabla g(\hat z) + \partial h (\hat z),\quad \lam \|\hat v\|\leq \|v +z^--z\| +2\sqrt{2 M_\lam \Delta}
    \end{equation}
    where $M_\lambda$ is as in \eqref{eq:def_f}.
\end{proposition}

The above proposition shows that the pair $(\hat z, \hat v)$, computed as in \eqref{eq:z_pRefProc} and \eqref{eq:ref_vp}, clearly satisfies the inclusion in \eqref{eq:approx_subgrad} and that the quantity
$\lambda \|\hat v\|$ has an upper  bound expressed in terms of the two quantities: $\|v+z^- -z\|$ and $\sqrt{M_\lam \Delta}$. Given a tolerance $\hat \rho>0$, it will be shown in Proposition~\ref{prop:gipp10} below
that the GD framework stated next 
generates a sequence of iterates $\{(\lam_k,z_k,v_k)\}$
whose corresponding refined sequence $\{(\hat z_k,\hat v_k)\}$ obtained as
$(\hat z_k,\hat v_k)=RP(\lam_k,z_{k-1},z_k,v_k)$ for every $k \geq 1$ yields a
$\hat \rho$--approximate stationary point of \eqref{eq:PenProb2Introa}.

\vspace*{0.5em}

\noindent\begin{minipage}[t]{1\columnwidth}
\noindent\rule[0.5ex]{1\columnwidth}{1pt}

\noindent\textbf{GD framework.}

\noindent\rule[0.5ex]{1\columnwidth}{1pt}
\end{minipage}

\vspace*{0.5em} 

\noindent {\bf Input:} a scalar $M > 0$, a function pair $(g,h)$ satisfying assumptions (A1)--(A3), an initial point $z_0 \in \dom h$, and a scalar pair $(\theta, \tau) \in \R_{++}^2$;

\begin{itemize}
\item [(0)] set $\phi := g + h$ and $k=1$;
\item [(1)] find a triple $(\lambda_k, z_k, v_k) \in \R_{++} \times \dom h \times \R^n $
such that its corresponding refined triple
\[
(\hat z_k, \hat v_k,\Delta_k) = RP(\lam_k,z_{k-1},z_k,v_k)
\]
satisfies
\begin{gather}
 \|v_k+ z_{k-1} - z_k \|^2 \leq \theta\lam_k  [\phi(z_{k-1}) - \phi(z_k)],
 \label{eq:bd_prox-approx} \\
2 \left( \lam_k M + 1 \right) \Delta_k \leq \tau \|v_k+z_{k-1}-z_k \|^2; \label{eq:eps_gsm_bd}
\end{gather}

\item [(2)] set $k = k+1$ and go to step 1.

\end{itemize}
\rule[0.5ex]{1\columnwidth}{1pt}

We now make three remarks about the GD framework.
First, no termination criterion is added to the GD framework so as to be able to discuss convergence rate results about its generated sequence.
A discussion of how to terminate it is given after Proposition~\ref{prop:gipp10} below.
Second, step 1 should be viewed as an oracle in that it does not specify how to compute the triple
$(\lambda_k, z_k, v_k)$. Third, Corollary~\ref{cor:exact_prox} below shows that if the stepsize $\lambda_k$
is chosen so that the 
prox subproblem \eqref{eq:penPbRegIntroa} is a strongly convex composite problem,  i.e.,
$\lam_k \in (0, 1/\underline m)$ where $\underline{m}$ is as in \eqref{eq:m_lower_def}, the point $z_k$ is chosen as its unique optimal solution, and $v_k$ is set to zero, then
the triple $(\lambda_k, z_k, v_k)$ satisfies \eqref{eq:bd_prox-approx} and \eqref{eq:eps_gsm_bd} with
$\theta = 2$ and $\tau = 0$. Thus, when
$(\theta, \tau)  \in [2,\infty) \times[0,\infty)$, we conclude that:
(i) there always
exists a triple satisfying \eqref{eq:bd_prox-approx} and \eqref{eq:eps_gsm_bd}; and,
(ii)  the GD framework can be viewed as an IPP method.
Fourth, the R-AIPP of Section~\ref{sec:AIPPmet}, being a special instance of the GD framework,  can also be viewed
as an IPP method which chooses $(\theta, \tau)$ in the open rectangle $(2,\infty) \times(0,\infty)$ and
applies an ACG variant, such as the one described in Section~\ref{sec:Nesterov's-Method}, to problem \eqref{eq:penPbRegIntroa}
in order to obtain a triple $(\lambda_k,z_k,v_k)$ satisfying \eqref{eq:bd_prox-approx} and \eqref{eq:eps_gsm_bd}.


The following result shows an important property about the sequence of iterates $\{(\lam_k, \hat z_k, \hat v_k)\}$.

\begin{proposition}\label{prop:gipp10} The sequences of stepsizes $\{\lambda_k\}$ and iterate pairs $\{(\hat z_k, \hat v_k)\}$ satisfy
\begin{equation}
\hat v_k \in \nabla g(\hat z_k) + \pt h(\hat z_k),\quad \min_{i\leq k} \|\hat v_i\|^2 \leq \theta\left(1+2\sqrt{\tau}\right)^2 \frac{[\phi(z_0) - \phi_*]}{\Lambda_k}, \label{eq:corGD_complex_b}
\end{equation}
for every $k \ge 1$, where $\Lambda_k := \sum_{i=1}^k \lam_i$.

\end{proposition}

\begin{proof} Let $k\geq 1$ be fixed. The inclusion in \eqref{eq:corGD_complex_b} follows from Proposition~\ref{prop:approxsol} with $(\hat z, \hat v)=(\hat z_k, \hat v_k)$ and the definitions of $\hat z_k$ and $\hat v_k$ in step~1 of the GD framework.
To show the inequality in \eqref{eq:corGD_complex_b}, first observe that \eqref{eq:bd_prox-approx} and the definition of $\phi_*$ in \eqref{eq:PenProb2Introa} implies that
\begin{align} 
& \phi(z_0) - \phi_* \geq \sum_{i=1}^k [\phi(z_{i-1})-\phi(z_i)] 
\geq \sum_{i=1}^k \frac{\|v_i + z_{i-1} - z_i\|^2}{\theta\lam_i} \geq \frac{\Lambda_k}{\theta} \min_{i\leq k} \frac{1}{\lam_i^2} \|v_i + z_{i-1} - z_i\|^2. \label{eq:GD_complex_prf_a} 
\end{align}
Now, let $i\geq1$ be arbitrary. In view of step~1 of the GD framework we have 
$(\hat z_i, \hat v_i,\Delta_i) = RP(\lam_i,z_{i-1},z_i,v_i)$. Hence Proposition~\ref{prop:approxsol}
with $(\lam,z^-,z,v, \hat v) = (\lam_i, z_{i-1}, z_i, v_i, \hat v_i)$
and \eqref{eq:eps_gsm_bd} with $k=i$ imply that
\begin{equation} \label{eq:v_hat_bd}
     \|\hat{v}_{i}\| \leq\left(1+2\sqrt{\tau}\right)\frac{\|v_{i}+z_{i-1}-z_{i}\|}{\lam_{i}}.
\end{equation}
The inequality in \eqref{eq:corGD_complex_b} now follows by combining \eqref{eq:GD_complex_prf_a} and \eqref{eq:v_hat_bd}.
\end{proof}


We now make three remarks about the GD framework in light
of Proposition~\ref{prop:gipp10}. 
First, if the GD framework stops when a pair $(\hat z_k,\hat v_k)$ such that $\|\hat v_k\| \leq \hat \rho$
is found, then it follows from \eqref{eq:approx_subgrad} and the inclusion in \eqref{eq:corGD_complex_b} that
$(\hat z_k, \hat v_k)$ is a $\hat \rho$--approximate stationary point of \eqref{eq:PenProb2Introa}.
Second, if the sequence of stepsizes $\{\lambda_i\}$ satisfies  $\lim_{k\to\infty} \Lambda_k = \infty$, then it follows
from the inequality in \eqref{eq:corGD_complex_b} and assumption (A3) that the GD framework indeed stops according
to the above termination criterion.
Third, \eqref{eq:corGD_complex_b} indicates that
the larger the stepsizes $\lam_k$ are, the faster the quantity $\min_{i\leq k} \|\hat v_i\|$ approaches zero. 

For the remainder of this section, our goal is to show that the GD framework can be seen as a relaxation of the GIPP framework studied in \cite{WJRproxmet1}. The proof of this fact is not essential in establishing any results pertaining to the R-AIPP method in Section~\ref{sec:AIPPmet} or the R-QP-AIPP method in Section~\ref{sec:penalty} and may skipped  without any loss of continuity.

Recall that, for a given $z_0\in \dom h$ and $\sigma\in [0,1)$, the GIPP framework in \cite{WJRproxmet1} considers a sequence  $\{(\lambda_k, z_k, v_k, \varepsilon_k)\} \subseteq \R_{++} \times \dom\phi \times \R^n\times\R_+$  satisfying
\begin{gather}
{v}_k\in\partial_{{\varepsilon}_k}\left(\lambda_k\phi+\frac{1}{2}\|\cdot-z_{k-1}\|^{2}\right)(z_k), \quad 
\|{v}_k\|^{2}+2{\varepsilon}_k\leq\sigma\|{v}_k+z_{k-1}-z_k\|^{2}, \label{eq:gipp_main}
\end{gather} 
for every $k \ge 1$.
We now state a simple technical result which will not only be used in this section but also later in the analysis of the R-ACG algorithm
(see Section~\ref{sec:Nesterov's-Method}).

%
%
%
%

    \begin{lemma}\label{lem:Del_bd}
    Assume that $\varepsilon\geq0$ and $(\lam,z^-,z,v)\in\R_{++}\times\R^n\times\dom h \times \R^n$ satisfy 
    \begin{gather}
    v\in\partial_\varepsilon\left(\lam\phi+\frac{1}{2}\|\cdot-z^-\|^2\right)(z). \label{eq:Del_bd_incl}
    \end{gather}
    Then, the quantity $\Delta$ defined in \eqref{eq:ref_var}  satisfies $\Delta\leq\varepsilon$.
\end{lemma}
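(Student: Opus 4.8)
The plan is to recognize that the auxiliary function $g_\lam+h_\lam$ appearing in Proposition~\ref{prop:approxsol} is, up to the linear term $-\inner{w}{\cdot}$, precisely the function $\lam\phi+\frac12\|\cdot-z^-\|^2$ featuring in the $\varepsilon$-subdifferential inclusion \eqref{eq:Del_bd_incl}, and then to read \eqref{eq:Del_bd_incl} off as a \emph{global} (up to an additive $\varepsilon$) lower bound for $g_\lam+h_\lam$ at the point $z$.

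First I would use $\phi=g+h$ together with the definitions in \eqref{eq:def_f} to record the identity
\[
g_\lam+h_\lam \;=\; \lam g+\tfrac12\|\cdot-z^-\|^2-\inner{w}{\cdot}+\lam h \;=\; \Big(\lam\phi+\tfrac12\|\cdot-z^-\|^2\Big)-\inner{w}{\cdot}.
\]
Next I would expand \eqref{eq:Del_bd_incl} using the definition \eqref{eq:epsubdiff} of the $\varepsilon$-subdifferential: for every $u\in\R^n$,
\[
\lam\phi(u)+\tfrac12\|u-z^-\|^2 \;\ge\; \lam\phi(z)+\tfrac12\|z-z^-\|^2+\inner{w}{u-z}-\varepsilon.
\]
Subtracting $\inner{w}{u}$ from both sides and substituting the displayed identity for $g_\lam+h_\lam$ converts this into
\[
(g_\lam+h_\lam)(u)\;\ge\;(g_\lam+h_\lam)(z)-\varepsilon\qquad\forall\,u\in\R^n.
\]

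Finally, I would specialize $u=z^r$, with $z^r$ the point defined in \eqref{eq:z_pRefProc} (note $z^r\in\dom h$, since otherwise $h_\lam(z^r)=+\infty$ and $z^r$ could not be a minimizer of the objective in \eqref{eq:z_pRefProc}); rearranging and recalling \eqref{eq:ref_var} gives $\Delta^r=(g_\lam+h_\lam)(z)-(g_\lam+h_\lam)(z^r)\le\varepsilon$, which is the claim. The only thing requiring care is the bookkeeping in matching the quadratic and linear terms between $\lam\phi+\frac12\|\cdot-z^-\|^2$ and $g_\lam+h_\lam$; there is no genuine obstacle, since the argument never uses that $z^r$ is the minimizer in \eqref{eq:z_pRefProc} — it holds verbatim with $z^r$ replaced by any point of $\R^n$.
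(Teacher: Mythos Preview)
Your proof is correct and follows essentially the same route as the paper: both arguments unfold the $\varepsilon$-subdifferential inequality \eqref{eq:Del_bd_incl}, specialize it at the point $z^r$, and recognize the resulting difference of function values as $\Delta^r$ via the identity $g_\lam+h_\lam=\lam\phi+\tfrac12\|\cdot-z^-\|^2-\inner{w}{\cdot}$. The only cosmetic difference is that you record this identity explicitly before evaluating, whereas the paper performs the rearrangement after substituting $z'=z^r$.
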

\begin{proof}
    Let $(\hat z,\Delta)$ be computed as in \eqref{eq:z_pRefProc} and \eqref{eq:ref_var}. It follows from \eqref{eq:epsubdiff} and \eqref{eq:Del_bd_incl} that 
    \[
    \lambda\phi(z')+\frac{1}{2}\|z'-z^-\|^{2} \geq \lambda\phi(z)+\frac{1}{2}\|z-z^-\|^{2} + \langle v, z' - z \rangle - \varepsilon \quad \forall z' \in \R^n.
    \]
    Considering the above inequality at the point $z'=\hat z$, along with some algebraic manipulation, we have
    \begin{align*}
    \varepsilon & \geq \left[\lam\phi(z)+\frac{1}{2}\|z-z^-\|^{2} - \langle v, z \rangle \right] - \left[\lam\phi(\hat z)+\frac{1}{2}\|\hat z-z^-\|^{2} - \langle v, \hat z \rangle \right] = \Delta
    \end{align*}
    where the last equality is due to the definitions of $\phi$ and $\Delta$ given in \eqref{eq:PenProb2Introa} and \eqref{eq:ref_var}, respectively. 
\end{proof}

The following result shows the relationship between the GIPP framework of
\cite{WJRproxmet1} and the GD framework of this section.

\begin{proposition}\label{prop:GIPP1}
    If, for some $z_{k-1} \in \dom h$, constant $\sigma \in [0,1)$, and index $k \ge 1$, the quadruple
    $(\lam_k,z_k,v_k,\varepsilon_k)$ satisfies
    \eqref{eq:gipp_main}, then $(\lam_k,z_k, v_k)$ satisfies \eqref{eq:bd_prox-approx} and \eqref{eq:eps_gsm_bd}
    for any $\theta \geq  2/(1-\sigma)$ and $\tau \geq \sigma (\lam_k M + 1)$.
    As a consequence, if $\sup \{ \lam_k : k\geq 1 \} < \infty $, then
     every instance of the GIPP framework is an instance of the GD framework for any $(\theta,\tau)$ satisfying
     \begin{equation}\label{eq:gipp_gd_cond}
     \theta \geq \frac{2}{1-\sigma}, \quad \tau \geq \sup \left\{ \sigma (\lam_k M + 1) : k \geq 1\right\}.
     \end{equation}
\end{proposition}

\begin{proof}
    The proof that $(\lambda_k, z_k, v_k)$ satisfies
    \eqref{eq:bd_prox-approx} with $\theta=2/(1-\sigma)$ can be found in \cite[Proposition 5(a)]{WJRproxmet1}.
    Now, let $k\geq1$ and observe that from Lemma~\ref{lem:Del_bd} with $(\lam,z^-,z,v) = (\lam_k, z_{k-1}, z_k, v_k)$ and $\varepsilon=\varepsilon_k$ we have $\Delta \leq \varepsilon_k$. It follows from the last inequality  and  the inequality in \eqref{eq:gipp_main} that  $2 \Delta \leq \sigma\|v_k + z_{k-1} - z_k\|^2$. Combining the previous inequality with the assumption on $\tau$ now shows that $(\lambda_k, z_k, v_k)$ satisfies \eqref{eq:eps_gsm_bd}. The second part of the proposition follows immediately from the first part and condition \eqref{eq:gipp_gd_cond}.
\end{proof}

The above proposition shows that if $\{\lam_k\}$ is bounded and the parameter triple $(\sigma,\theta,\tau)$ satisfies \eqref{eq:gipp_gd_cond}, then the condition for finding an iterate $(\lam_k,z_k,v_k)$ in the GD framework is more relaxed than the condition for finding an iterate $(\lam_k,z_k,v_k,\varepsilon_k)$ in the GIPP framework. As a consequence,  under
the conditions in \eqref{eq:gipp_gd_cond}, an optimization algorithm (such as the R-ACG algorithm of Section~\ref{sec:Nesterov's-Method}) applied to \eqref{eq:penPbRegIntroa}
is expected to find the triple
$(\lam_k,z_k,v_k)$ for the GD framework faster than the quadruple $(\lam_k,z_k,v_k,\varepsilon_k)$ for the GIPP framework.

The following corollary justifies the third remark following the GD framework.

\begin{corollary} \label{cor:exact_prox}
    Let $z_{k-1} \in \dom h$ and $\lam_k \in (0,1/ \underline{m})$ be given, where $\underline{m}$ is as in \eqref{eq:m_lower_def}. Then, \eqref{eq:penPbRegIntroa}
     has a unique global minimum $z_k$ and the triple $(\lam_k, z_k, v_k) \in \r_{++} \times \dom h \times \rn$ where
     $v_k=0$ satisfies \eqref{eq:bd_prox-approx} and \eqref{eq:eps_gsm_bd} with $\theta = 2$ and $\tau  = 0$. 
\end{corollary}

\begin{proof}
The existence and unique uniqueness of $z_k$ follows from the fact that  $\phi + \|\cdot-z_{k-1}\|^2/\lam_k$ is strongly convex. Moreover, the fact that $z_k$ is the unique global minimum of \eqref{eq:penPbRegIntroa} implies that the quadruple
    $(\lam_k,z_k,v_k,\varepsilon_k)$, where $(v_k,\varepsilon_k)=(0,0)$, satisfies \eqref{eq:gipp_main} with $\sigma=0$.
The conclusion of the corollary now follows immediately from the first part of Proposition~\ref{prop:GIPP1} with $\sigma=0$.
%
\end{proof}


\section{A relaxed accelerated composite gradient algorithm}\label{sec:Nesterov's-Method}

This section presents and analyzes an ACG variant, namely, the R-ACG algorithm, which is used as an important tool in the development of the R-AIPP method of
Section~\ref{sec:AIPPmet}.
More specifically, the R-AIPP method can be viewed as a special instance of the GD framework where step 1 is implemented by repeatedly calling the ACG variant of this section.

Before describing the variant, we consider its assumptions as well as the problem that it solves.
First, we describe the assumptions. Let $\widetilde \phi : \r^n \to (-\infty, \infty]$ be given and assume that it
can be decomposed as
$\widetilde \phi=\widetilde \phi^{(s)}+\widetilde \phi^{(n)}$ where:
\begin{itemize}[align=left]
    \item [(B1)]$\widetilde \phi^{(n)}\in \bConv$;
    \item [(B2)]$\widetilde \phi^{(s)}$ is  a differentiable function on $\dom \widetilde\phi^{(n)}$ such that for some $\widetilde{M} > 0$, 
    $$
    \widetilde\phi^{(s)}(u) \leq   \ell_{\widetilde\phi^{(s)}}(u;x) + \frac{\widetilde{M}}{2}\|u-x\|^2\quad \forall u,x \in \dom \psi^{(n)}.
    $$
\end{itemize}

%
%
%


We now describe our problem of interest in this section.



\vgap

\noindent{\bf Problem A:} Given $\widetilde \phi:  \R^n \to (-\infty,+\infty]$ satisfying the above assumptions,
a point $x_0 \in \R^n$, and a pair of parameters $(\theta, \tau)\in(2,\infty)\times (0, \infty)$,
the problem is to find a triple $(x,u,\eta) \in \R^n \times \R^n \times \R_+$ such that
\begin{gather}
    \|x_{0}-x+u\|^{2}\leq \theta \left[\widetilde \phi (x_{0})- \widetilde \phi (x)\right], \label{eq:ACG_descent}\\
    u \in \partial_{\eta} \left( \widetilde \phi + \frac12 \|\cdot-x_0\|^2 \right) (x), \quad
    2 \left(\widetilde M + 1\right) \eta \leq \tau \|x_{0}-x+u\|^{2}. \label{eq:ACG_incl_eta}
\end{gather}

%



\vgap

The following simple result shows how the ability to solve Problem A
allows us to implement the ``step 1'' oracle in the GD framework.

\begin{proposition} \label{prop:ACG_gd_link}
Assume that $\phi=g+h$ satisfies conditions (A1) and (A2), and
 let $z_{k-1}\in \dom h$ be given.
Then the following statements hold:
\begin{itemize}
     \item[(a)] if $(x,u)$ satisfies \eqref{eq:ACG_descent} with $(\widetilde \phi, \widetilde M, x_0)=(\lambda \phi, \lam M,z_{k-1})$ for some
     $\lam>0$,
then  the triple $(\lam_k,z_k,v_k):=(\lam,x,u)$ satisfies \eqref{eq:bd_prox-approx}; 
     \item[(b)]
      if $(x,u,\eta)$ solves Problem A with input $(\widetilde \phi,\widetilde M, x_0)=(\lambda \phi, \lam M, z_{k-1})$
      for some $\lam>0$, then the triple $(\lam_k,z_k,v_k)=(\lam,x,u)$
solves step~1 of the GD framework.
\end{itemize}
\end{proposition}

\begin{proof} 
(a) Assume that $(x,u)$ satisfies \eqref{eq:ACG_descent}. It follows from the fact that $(\lam, x, u) = (\lam_k, z_k, v_k)$ and the definition of $\widetilde\phi$ that
\begin{align*}
\left\Vert  z_{k-1} - z_k+v_k \right\Vert^2  \leq \theta\left[\widetilde\phi(z_{k-1}) - \widetilde\phi(z_k) \right] = \theta\lam_k \left[\phi(z_{k-1}) - \phi(z_k)\right] 
\end{align*}
and thus the triple $(\lambda_k, z_k, v_k)$ satisfies \eqref{eq:bd_prox-approx}.

(b)  Assume that $(x,u,\eta)$ satisfies \eqref{eq:ACG_incl_eta}  and define $\varepsilon:=\eta$ and $(z^-, z, v) := (x_0, x, u)$. Moreover, let $\Delta$ be computed as in \eqref{eq:ref_var} with $\hat z$ as in \eqref{eq:z_pRefProc}. It follows from Lemma~\ref{lem:Del_bd}, the definition of $\widetilde\phi$, the fact that $\eta=\varepsilon$, and the inclusion in \eqref{eq:ACG_incl_eta} that $\Delta\leq\eta$. Using the inequality in \eqref{eq:ACG_incl_eta} and the fact that $(x_0,x,u)=(z_{k-1},z_k,v_k)$ gives $2 (\widetilde M + 1)\Delta\leq\tau\|z_{k-1}-z_k+v_k\|^2$ and thus the pair $(z_k,v_k)$ satisfies \eqref{eq:eps_gsm_bd} in view of the definition of $\widetilde M$. As a consequence, the triple $(\lam_k,z_k,v_k)$ solves step~1 of the GD framework.
\end{proof}

The R-ACG algorithm presented below,
which is a modified ACG variant for minimizing the function $\psi := \widetilde \phi + \|\cdot-x_0\|^2/2$,
solves  Problem A under the assumption that  $\psi$ is convex (see Proposition~\ref{prop:nest_complex}(c) below).
As a consequence, it can be used to implement step~1 of the GD framework whenever $\lambda_k$ is sufficiently small. More specifically,
since $\lam_k \phi + \|\cdot - z_{k-1}\|^2/2$ is clearly convex whenever $\lam_k$ is chosen in $(0,1/\underline{m}]$, where $\underline{m}$ is as in \eqref{eq:m_lower_def},
we can use the R-ACG algorithm to solve problem A with $\widetilde \phi=\lam_k \phi$ and $x_0=z_{k-1}$, and hence
the ``step 1'' oracle in the GD framework in view of Proposition~\ref{prop:ACG_gd_link}(b).
In fact, the AIPP method developed in \cite{WJRproxmet1} is an instance of the GIPP framework (and hence an instance of the GD framework) in which, given an upper bound $m$ on $\underline{m}$, it chooses $\lambda_k=1/(2m)$ for all $k$ and in which step 1 is implemented with a single call to the R-ACG algorithm presented below.

However, our main goal in this paper is the development of an instance of the GD framework which aggressively chooses $\lam_k$ (possibly) much larger than $1/ \underline{m}$ since, according to Proposition~\ref{prop:gipp10}, this strategy can potentially reduce its number of iterations.
In this regard, the R-ACG algorithm presented below accepts as input a function $\widetilde\phi$
of the form $\widetilde \phi=\lam \phi$ for some $\lam>0$ in which $\widetilde\phi + \|\cdot-x_0\|^2/2$ is not necessarily convex,
and terminates with either failure or by finding a triple $(x,u,\eta)$ satisfying \eqref{eq:ACG_descent} within ${\cal O}(\widetilde{M}^{1/2} \log^+_1 \widetilde M )$ iterations (see statements (a) and (b) of Proposition~\ref{prop:nest_complex}  below).
Clearly, in the second case,
the triple $(\lam_k,z_k,v_k)=(\lam,x,u)$ is guaranteed to  satisfy \eqref{eq:bd_prox-approx} but not necessarily \eqref{eq:eps_gsm_bd} (see Proposition~\ref{prop:ACG_gd_link}(a)).
If \eqref{eq:eps_gsm_bd} is satisfied then the R-ACG algorithm clearly provides a solution of the ``step 1'' oracle of the GD framework; otherwise, the stepsize $\lam$ is considered large.
The R-AIPP method of Section~\ref{sec:AIPPmet} is an instance of the GD framework which attempts to provide a solution of its ``step 1'' oracle in this manner and adaptively reduces $\lam$ whenever it is found to be  large.

\noindent\begin{minipage}[t]{1\columnwidth}
\noindent\rule[0.5ex]{1\columnwidth}{1pt}

\noindent\textbf{R-ACG algorithm.}

\noindent\rule[0.5ex]{1\columnwidth}{1pt}
\end{minipage}

\vspace*{0.5em}

\noindent {\bf Input:} a scalar  $\widetilde{M}>0$, a function pair $(\widetilde\phi^{(s)},\widetilde\phi^{(n)})$ satisfying assumptions (B1) and (B2), an initial point $x_0 \in \dom \widetilde\phi^{(n)}$, and a pair of parameters  $(\theta,\tau) \in   (2,\infty)\times (0, \infty)$;

\vspace*{1em}

\noindent {\bf Output:} a triple $(x, u, \eta) \in \dom \widetilde\phi^{(n)} \times \rn \times \r_+$ satisfying \eqref{eq:ACG_descent} or a \textbf{failure} status;

\begin{itemize}
\item[(0)] set $y_{0}=x_{0}$, $A_{0}=0$, $\Gamma_0\equiv0$, $j=1$, and define
\begin{gather} \label{eq:acg_defs}
\psi^{(s)} := \widetilde\phi^{(s)} + \frac{1}{4}\|\cdot-x_0\|^2,\qquad \psi^{(n)} := \widetilde\phi^{(n)}  + \frac{1}{4}\|\cdot-x_0\|^2, \qquad 
\psi := \psi^{(s)} + \psi^{(n)}, \\
\widetilde{L} := \widetilde{M} + \frac{1}{2}, \qquad \mu := \frac{1}{2};
\end{gather}
\item[(1)]  compute
\begin{align}
 A_{j}  &=A_{j-1}+\frac{\mu A_{j-1} + 1+\sqrt{(\mu A_{j-1} + 1)^2+4\widetilde{L}(\mu A_{j-1}+ 1)A_{j-1}}}{2\widetilde{L}},\label{Formula_Aj}\\
\widetilde{x}_{j-1}  &=\frac{A_{j-1}}{A_{j}}x_{j-1}+\frac{A_{j}-A_{j-1}}{A_{j}}y_{j-1},\quad\Gamma_{j}=\frac{A_{j-1}}{A_{j}}\Gamma_{j-1}+\frac{A_{j}-A_{j-1}}{A_{j}}\ell_{\psi^{(s)}}(\cdot,\widetilde x_{j-1}),\label{Formula_Gammaj}\\
y_{j} &=\argmin_{y} \left\{\Gamma_{j}(y)+\psi^{(n)}(y)+\frac{1}{2A_{j}}\|y-y_{0}\|^{2}\right\},\label{def:yj}\\\
x_{j} & =\frac{A_{j-1}}{A_{j}}x_{j-1}+\frac{A_{j}-A_{j-1}}{A_{j}}y_{j}\label{def:xj}
\end{align}
and set
\begin{equation}\label{def:ujetaj}
u_{j}=\frac{y_0-y_{j}}{A_{j}}, \quad
\eta_{j}= \max \left\{ \psi(x_{j})-\Gamma_{j}(y_{j})- \psi^{(n)}(y_{j})-\langle u_{j},x_{j}-y_{j}\rangle, 0 \right\};
\end{equation}
\item[(2)]
if  both inequalities 
\begin{align}
&\|A_{j}u_{j}+x_{j}-x_{0}\|^{2}+2A_{j}\eta_{j} \le  \|x_{j}-x_{0}\|^2, \label{mainIneqAACG}\\[2mm]
&\psi(x_{0}) \geq \psi(x_{j}) +\langle u_{j},x_{0}-x_{j}\rangle -\eta_{j},\label{crit:subIneqACG}
\end{align}
hold, then  go to step 3; otherwise,  stop with {\bf failure};
\item[(3)]   if both inequalities
\begin{gather}
2 \left(\widetilde M + 1\right) \eta_{j} \leq \tau \|x_{0}-x_{j}+u_{j}\|^{2}\label{ineq:ACGHPE_algo}, \\
 \|x_{0}-x_j+u_j\|^{2}\leq \theta\left[\widetilde\phi(x_{0})-\widetilde\phi(x_j)\right], \label{crit:ACGdescent_algo}
\end{gather}
hold, then return $(x,u,\eta)=(x_j, u_j,\eta_j)$; otherwise,
increment $j = j+1$ and go to step 1.

\end{itemize}
\noindent\rule[0.5ex]{1\columnwidth}{1pt}


Some comments about the above algorithm are in order. First, step 1 is essentially a standard step of an ACG variant
(see, for example, \cite{YHe2,WJRproxmet1}) applied to the problem $\min\{\widetilde \phi(x) + \|x-x_0\|^2/2 : x\in \rn\}$ with the exception that it also computes in \eqref{def:ujetaj}
the quantities $u_j$ and $\eta_j$ which, together with
$x_j$, determine the termination criteria for the method.
Second, it is shown in \cite[Lemma 9]{WJRproxmet1}  that a simplified version of the above algorithm, namely, one that does not include the
two tests performed in step 2 and stops whenever the inequality in \eqref{eq:gipp_main} is satisfied with 
$(z_{k-1}, z_k, {v}_k, {\varepsilon}_k)=(x_0, x_j, u_j, \eta_j)$, implements step 1 of the GIPP framework in \cite{WJRproxmet1}. 
Finally,  it is well-known (see, for example, \cite[Proposition 2.3]{YHe2}) that the scalar $A_j$ updated according to \eqref{Formula_Aj} satisfies
\begin{equation}\label{increasingAj}
A_{j}\geq\frac{1}{L}\max\left\{\frac{j^{2}}{4},\left(1+\sqrt{\frac{\mu}{4\widetilde{L}}}\right)^{2(j-1)}\right\}\quad \forall  j\geq 1.
\end{equation}

The next result establishes the iteration-complexity bound and some properties of the R-ACG algorithm. 

%
%

\begin{proposition}\label{prop:nest_complex} 
The R-ACG algorithm satisfies the following statements:
\begin{itemize}
\item[(a)]
it stops (either with success or failure) in at most 
\begin{equation}\label{eq:numbiter_ACG}
\left\lceil 1+\left(\sqrt{2\widetilde{M}+1}\right)\log_{1}^{+}\left(C\left[2\widetilde{M}+1\right]\right)\right\rceil
\end{equation}
iterations, where
\begin{equation} \label{eq:C_def}
C := \max\left\{\left[1 + \sqrt{\frac{\widetilde M + 1}{\tau}}\right]^2, \left[1+\sqrt{\frac{\theta}{\theta-2}}\right]^2 \right\};
\end{equation}
\item[(b)] 
if it stops with success then its output $(x,u,\eta)$ satisfies
\begin{gather} \label{eq:ACG_term_descent}
 \|x_{0}-x+u\|^{2}\leq \theta\left[\widetilde\phi(x_{0})-\widetilde\phi(x)\right];
\end{gather}
\item[(c)]
if $\widetilde\phi^{(s)}+\|\cdot-x_0\|^2/2$ is convex
then it always terminates with success and its output $(x,u,\eta)$ solves Problem~A.

\end{itemize}
\end{proposition}

\begin{proof}
(a) See Appendix~\ref{app:r_acg}.

(b) This follows from the fact that when the R-ACG algorithm stops with success, the last iterate $(x,u)=(x_j, u_j)$ satisfies \eqref{crit:ACGdescent_algo}.

(c) 
It follows from \cite[Proposition 8(c)]{WJRproxmet1} that if $\widetilde\phi^{(s)}+\|\cdot-x_0\|^2/2$ is convex, then the iterate $(x_j, u_j, \eta_j, A_j)$ satisfies
\eqref{mainIneqAACG} and the inclusion $u_j \in \partial_{\eta_j} (\widetilde\phi+\|\cdot-x_0\|^2/2)(x_j)$ for every $j\geq 1$.
Hence, since the aforementioned inclusion and the definition of $\psi$ in \eqref{eq:acg_defs} imply \eqref{crit:subIneqACG},  we conclude that  the R-ACG algorithm does not terminate with failure (see step~2). As a consequence, it follows from statement (a) that it must terminate with success.
It then follows from the previous inclusion, and the fact that the last iterate $(x,u,\eta):=(x_j, u_j, \eta_j)$ satisfies \eqref{ineq:ACGHPE_algo}, that
$\eta$ fulfills \eqref{eq:ACG_incl_eta}. 
\end{proof}



\section{A relaxed accelerated inexact proximal point method}\label{sec:AIPPmet}

This section states and analyzes a relaxed variant of the AIPP method proposed in \cite{WJRproxmet1}, namely, the R-AIPP method, for computing an approximate stationary point of \eqref{eq:PenProb2Introa} as in \eqref{eq:approx_subgrad}. 

The  R-AIPP method stated below is an instance of the GD framework which implements its step~1  by repeatedly invoking the ACG variant in Section~\ref{sec:Nesterov's-Method} and thereby generates the method's iteration sequence. More specifically, if $z_{k-1}$ denotes the previous iterate in the GD framework and $\lam:=\lam_k$ then the R-ACG algorithm is invoked to attempt to solve Problem A with curvature $\widetilde{M}$, function pair $(\widetilde\phi^{(s)},\widetilde\phi^{(n)})$, and initial point $x_0$ given by
\begin{equation*}
\widetilde{M} =\lam M, \quad \widetilde\phi^{(s)} = \lam g, \quad \widetilde\phi^{(n)} = \lam h, \quad x_0 = z_{k-1}.
\end{equation*}
If it succeeds, it obtains a pair $(x,u)$ which will satisfy condition \eqref{eq:ACG_descent} of Problem A. Consequently, if the triple $(\lambda_k, z_k, v_k) = (\lam, x, u)$ satisfies \eqref{eq:eps_gsm_bd}, then it is a solution of step 1 of the GD framework. If the R-ACG algorithm declares failure or the triple does not satisfy \eqref{eq:eps_gsm_bd}, then the stepsize $\lambda$ is reduced and the above procedure is repeated.

\noindent\begin{minipage}[t]{1\columnwidth}
\noindent\rule[0.5ex]{1\columnwidth}{1pt}

\noindent\textbf{R-AIPP method.}

\noindent\rule[0.5ex]{1\columnwidth}{1pt}
\end{minipage}

\vspace*{0.5em}

\noindent {\bf Input:} a tolerance $\hat\rho>0$, a scalar $M>0$, a function pair $(g, h)$ satisfying assumptions (A1)--(A3), an initial point $z_0 \in \dom h$, a scalar $\lam_0 > 0$, and a pair of parameters  $(\theta,\tau) \in   (2,\infty)\times (0, \infty)$;

\vspace*{1em}

\noindent {\bf Output:} a pair $(\hat z, \hat v) \in \dom h \times \rn$ satisfying \eqref{eq:approx_subgrad};

\begin{itemize}
\item [(0)] set $\lam = \lam_0$ and $k=1$;

\item [(1)] apply the R-ACG algorithm to \textbf{Problem A} in Section~\ref{sec:Nesterov's-Method} with inputs $\widetilde M$, 
$(\widetilde\phi^{(s)},\widetilde\phi^{(n)})$, $x_0$, and $(\theta,\tau)$, where
\begin{gather}
\widetilde{M} := \lam M, \quad \widetilde\phi^{(s)} := \lam g, \quad \widetilde\phi^{(n)} := \lam h, \quad x_0 := z_{k-1}; \label{eq:AIPPpsi}
\end{gather}
if the R-ACG algorithm stops with {\bf failure}  then set $\lambda=\lambda/2$ and repeat this step;
otherwise, let $(x,u,\eta)$ denote its output triple and go to step 2;

\item[(2)] compute  $(\hat z,\hat v, \Delta)=RP(\lam,z_{k-1},x,u)$ through the refinement procedure;
 if $$2 (\lam M + 1) \Delta > \tau \|u + z_{k-1} - x \|^2,$$ then set $\lambda=\lambda/2$ and go to step 1;
otherwise, set
$$(\lambda_k,z_k,v_k)=(\lam,x,u), \quad (\hat z_k,\hat v_k )= (\hat z,\hat v),$$ 
and go to step 3;
\item[(3)]
if $\hat v_k$ satisfies
\begin{equation}\label{ineq:stopadapAIPPM}
\|\hat v_k\|\leq \hat {\rho},
\end{equation}
then return $(\hat z,\hat v)=(\hat z_k,\hat v_k)$;
otherwise, increment $k = k+1$ and go to step 1;

\end{itemize}
\rule[0.5ex]{1\columnwidth}{1pt}

We now give some comments about the above method.
First, it performs two types of iterations, namely, the outer iterations which are indexed by $k$ and the inner ones which are performed by the R-ACG algorithm every time it is called in step~1.
Second, if the call to the R-ACG algorithm in step~1 does not stop with failure then, by Proposition~\ref{prop:nest_complex}(b), the triple $(x,u,\eta)$ output by the R-ACG algorithm together with the stepsize $\lambda$ will satisfy \eqref{eq:ACG_term_descent} where $\widetilde{\phi} = \lam (g + h)$. Hence, by Proposition~\ref{prop:ACG_gd_link}(a), the triple $(\lambda_k, z_k, v_k) := (\lambda, x, u)$ will satisfy \eqref{eq:bd_prox-approx}. If $\lambda$ is also not halved in step~2 then the definition of $\widetilde{M}$ and Proposition~\ref{prop:ACG_gd_link}(b) imply that the triple $(\lambda_k,z_k,v_k)$ also satisfies \eqref{eq:eps_gsm_bd}.
As a consequence, a single iteration of the R-AIPP method implements step 1 of the GD framework.
Third, the termination condition \eqref{ineq:stopadapAIPPM} and Proposition~\ref{prop:approxsol}, with $(\lam, z^-, z, v) = (\lam_k, z_{k-1}, z_k, v_k)$, imply that the required solution, i.e., 
 a pair  $(\hat z, \hat v)$ satisfying \eqref{eq:approx_subgrad}, is obtained when the R-AIPP method terminates. Fourth, since the R-AIPP iterates implement step 1 of GD framework, and the sequence $\{\lam_k\}$ is bounded below (see Lemma~\ref{lem1:adaptiveMet}(b) below), Proposition~\ref{prop:gipp10} implies that the sequence $\{\hat v_k\}$ generated by the R-AIPP method has a subsequence approaching zero, and thus the method must terminate in step 3. Fifth, although the R-AIPP method does not necessarily generate proximal subproblems with convex objective functions, it is shown in  Proposition~\ref{prop:relaxAIPPmethod} below that it has an iteration-complexity similar to that of  the AIPP method of \cite{WJRproxmet1}. Finally, in contrast to the aforementioned AIPP method, the R-AIPP neither requires an upper bound on the quantity $\underline{m}$ in \eqref{eq:m_lower_def} as part of its input nor does it place any restriction on the initial stepsize $\lam_0$.

Each iteration of the R-AIPP method may call  the R-ACG algorithm multiple times (possibly just one time). Invocations of the R-ACG algorithm that stop with success are said to be of type $S$ while the other invocations are said to be of type $O$.
Let $K_S$ (resp., $K_O$) denote the total number of R-ACG calls of type $S$ (resp., type $O$).
The following technical result provides some basic facts about $K_S$, $K_O$ and the sequence of stepsizes $\{\lambda_k\}$.

\begin{lemma}\label{lem1:adaptiveMet} The following statements hold for the R-AIPP method:
\begin{itemize}
\item[(a)] if the stepsize $\lam_{\bar k} \le 1/(2 \underline{m})$ for some $\bar k \geq 1$, then every iteration $k\geq \bar k$ is of type $S$ and, as a consequence, $\lambda_k=\lambda_{\bar k}$ for every $k > \bar k$;
\item[(b)] $K_O$ can be bounded as  $2^{K_O} \leq  \max\{1, 4{\lambda_0} \underline{m} \}$;
\item[(c)] $\{\lambda_k\}$ is non-increasing and satisfies $1 / \lambda_{k} \leq \max\{1/\lam_0, 4\underline{m}\}$ for all $k \geq 1$.
\end{itemize}
\end{lemma}

\begin{proof}
(a) Since $\lambda_{\bar k}\leq 1/(2 \underline{m})$, the definition of $\underline{m}$ in \eqref{eq:m_lower_def} implies that  $\widetilde\phi^{(s)}+\|\cdot-z_{k-1}\|^2/2$ is convex, where  $\widetilde\phi^{(s)}$ is as defined in \eqref{eq:AIPPpsi} with $\lam:=\lam_{\bar k}$. Hence, Proposition~\ref{prop:nest_complex}(c) together with Proposition~\ref{prop:ACG_gd_link}(b) imply that step~1 and step~2 do not halve $\lam$ at the $\bar k^{\rm th}$ iteration, which is to say that this iteration is of type $S$. Since $\{\lambda_k\}$ is clearly nonincreasing, the same conclusion holds true for every iteration $k\geq \bar k$. Moreover, as $\lam$ is not halved for subsequent iterations following $\bar k$, it follows that  $\lambda_k=\lambda_{\bar k}$ for every $k > \bar k$.

(b) Using the fact that immediately before  each iteration of type $O$, the stepsize  $\lambda$ is halved, we see that  the  condition $\lambda_{\bar k}\leq 1/(2m)$ in part (a) would eventually be satisfied for some iteration $\bar k\geq 1$, and hence $K_O$ is finite. Now, note that if $K_O=0$ then the inequality in part (b) follows immediately. Assume then that $K_O\geq 1$. 
It now follows from part (a) and the definition of $K_O$ that $\lam_0 / 2^{K_O - 1} > 1/(2 \underline{m})$, which clearly implies the inequality in part (b).

(c) The first statement follows trivially from the update rule of $\lambda_k$ in the R-AIPP method. Now, note that  the definition of $K_O$ together with the update rule for $\lambda_k$ imply, for every $k\geq 1$, that ${\lambda_0}/{2^{K_O}}\leq \lambda_k.$ 
The inequality in part (c) then follows from the inequality in part (b).
\end{proof}

In view of Lemma~\ref{lem1:adaptiveMet}(a), choosing an initial stepsize $\lam_0$ satisfying $\lam_0\leq 1/(2 \underline{m})$ results in an R-AIPP variant with constant stepsize, which resembles the AIPP method described in \cite{WJRproxmet1}. 


The next proposition presents a worst-case iteration complexity bound on the number of inner iterations of the R-AIPP method with respect to the inputs $M, \lam_0,$ and $z_0$, the quantity $\underline{m}$ in \eqref{eq:m_lower_def}, and the  tolerance $\hat\rho$.

\begin{proposition}\label{prop:relaxAIPPmethod}
    Defining $\xi_0 := \max\{1/\lam_0, 4\underline{m}\}$, the R-AIPP method outputs a $\hat\rho$--approximate stationary point $(\hat z,\hat v)$ of \eqref{eq:PenProb2Introa} in at most 
    \begin{equation}\label{eq:r_aipp_compl}
    {\cal O}\left(  \sqrt{M + \xi_0} \left[ \frac{\sqrt{\xi_0}
     \left[\phi(z_{0})-\phi_{*}\right]}{\hat{\rho}^{2}}+\sqrt{\lam_0} \right] \log_1^+\left(\lam_0 \left[M + \xi_0 \right]\right) \right)
    \end{equation}
    inner iterations.
    
\end{proposition}
\begin{proof}
    Let ${\rm TI}_S$ (resp. ${\rm TI}_O$) denote the total number of inner iterations performed during all calls of type $S$
    (resp. type $O$) (see the paragraph preceding Lemma \ref{lem1:adaptiveMet}).
    Clearly, the total number of inner iterations
    is ${\rm TI}:={\rm TI}_S+{\rm TI}_O$.
    We now bound each one of the quantities ${\rm TI}_S$ and ${\rm TI}_O$ separately by using the fact that
    assumption (A2), \eqref{eq:AIPPpsi}, and Proposition~\ref{prop:nest_complex}(a) imply that
    the number of inner iterations performed during each call to the R-ACG algorithm is bounded
    by 
    \begin{equation*}
    \left\lceil\sqrt{2\lam M+1}\log_{1}^{+}\left(C\left[2\lam M+1\right]\right)\right\rceil,
    \end{equation*}
    where $\lam$ is the value of $\lam$ just before the call and
    $C$ is as in \eqref{eq:C_def} with $\widetilde M = \bar \lam M$. 
    
    We first consider ${\rm TI}_O$. Note that Lemma~\ref{lem1:adaptiveMet}(b)   implies that
    $K_O$ is finite. Since ${\rm TI}_O=0$ when $K_O=0$, we may assume without loss of generality that
    $K_O\geq 1$.
    Note that the values of $\lam$ just before the $K_O$ calls of type O are exactly
    $\lam_0,\lam_0/2,\ldots,\lam_0/2^{K_O-1}$.
    Hence, we conclude that
    \begin{align*}
     {\rm{TI}}_{O} & \leq\sum_{i=1}^{K_{O}}\left(\sqrt{\frac{2{\lam_0}M}{2^{i-1}}+1}\right)\log_{1}^{+}\left(C\left[\frac{2{\lam_0}M}{2^{i-1}}+1\right]\right)
     \leq \sum_{i=1}^{K_{O}}\left(\sqrt{\frac{2{\lam_0} \left(M+ \xi_0 \right)}{2^{i-1}}}\right) \log_{1}^{+}\left(2C{\lam_0}\left[M + \xi_0 \right]\right)\\
     & \leq \left(2+\sqrt{2}\right)\sqrt{2{\lam_0}\left(M+\xi_0\right)}\log_{1}^{+}\left(2C{\lam_0}\left[M+\xi_0\right]\right),
    \end{align*}
    where the second inequality is due the fact that Lemma~\ref{lem1:adaptiveMet}(b) implies
    $2^{i-1} \le 2^{K_O-1} \leq 2\lam_0\xi_0$ for every $i \le K_O$.
    Thus, we obtain
    \begin{equation}\label{eq:numbinner00}
    {\rm{TI}}_{O}
    ={\cal O}\left(\sqrt{{\lam_0}\left(M + \xi_0 \right)}\log_{1}^{+}\left({\lam_0}\left[M+ \xi_0 \right]\right)\right).
    \end{equation}
    
    We now bound ${\rm TI}_S$. Suppose that $K_S > 1$ and observe that  the  termination criterion \eqref{ineq:stopadapAIPPM} is not satisfied in the first $K_S-1$ iterations. Since the R-AIPP method is an instance of the GD framework, it follows from Proposition~\ref{prop:gipp10} that
    \begin{equation} \label{eq:r_aipp_prf_L}
    \hat{\rho}^{2}<\min_{j\leq K_{L}-1}\|\hat{v}_{j}\|^{2}\leq\theta\left(1+2\sqrt{\tau}\right)^{2}\frac{\left[\phi(z_{0})-\phi_{*}\right]}{\sum_{j=1}^{K_{L}-1}\lam_{j}}.
    \end{equation}
    Using the fact that  Lemma~\ref{lem1:adaptiveMet}(c) implies $1/\lam_j \leq \max\{1/\lam_0, 4\bar m\} = \xi_0$ and $\lam_j \leq \lam_0$ for every $j\geq 1$, we obtain 
    \begin{align*}
{\rm TI}_{S} & \leq\sum_{j=1}^{K_{L}}\left(\sqrt{\lam_{j}M+1}\right)\log\left(C\left[\lam_{j}M+1\right]\right) \leq\sum_{j=1}^{K_{L}}\left(\sqrt{\lam_{j}(M+\xi_0)}\right)\log\left(C\lam_{j}[M+\xi_0]\right)\\
& \leq\sqrt{M+\xi_0}\left(\sum_{j=1}^{K_{L}-1}\frac{\lam_{j}}{\sqrt{\lam_{j}}}+\sqrt{\lam_{K_{L}}}\right)\log\left(C{\lam_0}[M+\xi_0]\right)\\
& \leq\sqrt{M+\xi_0}\left[\sqrt{\xi_0}\left(\sum_{j=1}^{K_{L}-1}\lam_{j}\right)+\sqrt{\lam_0}\right]\log\left(C{\lam_0}[M+\xi_0]\right)\\
& \leq\sqrt{M+\xi_0}\left[\theta\left(1+2\sqrt{\tau}\right)^{2}\frac{\sqrt{\xi_0}\left[\phi(z_{0})-\phi_{*}\right]}{\hat{\rho}^{2}}+\sqrt{{\lam_0}}\right] \log\left(C{\lam_0}[M+\xi_0]\right),
    \end{align*}
    Hence, we conclude that
    \begin{align}
    {\rm TI}_{S} 
    & ={\cal O}\left(\sqrt{M + \xi_0}\left[ \frac{\sqrt{\xi_0 }\left[\phi(z_{0})-\phi_{*}\right]}{\hat{\rho}^{2}}+\sqrt{\lam_0} \right]  \log_1^+\left(\lam_0\left[M + \xi_0\right]\right)\right). \label{eq:aux00010}
    \end{align}
    It can be easily seen that the bound in \eqref{eq:aux00010} trivially holds when $K_S\leq1$ in view of the last term in it. Indeed, to prove this, just assume that $\sum_{j=1}^{K_S-1}\lambda_j=0$ in the above argument bounding ${\rm TI}_S$.
    Now,  since ${\rm TI}={\rm TI}_O+{\rm TI}_S$,  the bound in \eqref{eq:r_aipp_compl} follows by adding  \eqref{eq:numbinner00} and  \eqref{eq:aux00010}. 
    
    The last statement of the proposition follows due to Proposition~\ref{prop:approxsol} and the termination condition in step~3 of the R-AIPP method.
\end{proof}

Observe that, unless $\lam_0$ is large or $\underline{m}$ is small, the first term in \eqref{eq:r_aipp_compl} dominates the second one. 

The numerical experiments in Section~\ref{sec:numerical} consider three variants of the R-AIPP method, two of which are R-AIPP instances with different choices of $\lam_0$. More specifically, given an upper bound $m$ on $\underline{m}$, one of the R-AIPP instances chooses $\lam_0 = 0.9/(2m)$ while the other one chooses $\lam_0=1$. For the problem instances considered, the former choice of $\lam_0$ is relatively small, while the latter choice is relatively large.

We now end this section by discussing some possible choices of the initial stepsize $\lam_0$ and how the corresponding R-AIPP instances compare to the AIPP method of \cite{WJRproxmet1}. First,  the AIPP method requires knowledge of an upper bound $m$ on $\underline{m}$ such that $m={\cal O}(M)$, and, as a consequence of a more general iteration complexity bound derived in \cite[Corollary~14]{WJRproxmet1}, its inner iteration complexity can be shown to be
\begin{equation}\label{eq:thcomplexAIPP}
\mathcal{O}\left(\sqrt{M}\left[\frac{\sqrt{m}\left[\phi(z_0) - \phi_*\right]}{{\hat{\rho} }^2}
+\sqrt{\frac{1}{m}}\log^+_1\left(\frac{M}{m}\right)\right]\right).
\end{equation}
Now, if $m$ as above is also known to the R-AIPP and the input $\lam_0$ is set to $1/(4m)$, then its inner iteration complexity \eqref{eq:r_aipp_compl} reduces to 
\begin{equation}\label{eq:r_aipp_spec_compl1}
{\cal O}\left(  \sqrt{M} \left[ \frac{\sqrt{m}
    \left[\phi(z_{0})-\phi_{*}\right]}{\hat{\rho}^{2}} + \sqrt{\frac{1}{m}} \right] \log_1^+\left(\frac{M}{m}\right) \right),
\end{equation}
which is the same as \eqref{eq:thcomplexAIPP} up to a logarithmic factor. On the other hand, if $\lam_0$ is chosen so that $1/\lam_0 = {\cal O}(\underline{m})$ then \eqref{eq:r_aipp_compl} reduces to 
\begin{equation}\label{eq:r_aipp_spec_compl2}
{\cal O}\left(  \sqrt{M} \left[ \frac{\sqrt{\underline{m}}
    \left[\phi(z_{0})-\phi_{*}\right]}{\hat{\rho}^{2}} + \sqrt{\lam_0} \right] \log_1^+\left(\lam_0 M\right) \right),
\end{equation}
whose dominant first term is as good as the dominant first term in \eqref{eq:thcomplexAIPP} whenever $\sqrt{\underline{m}} \log_1^+(\lam_0 M) = {\cal O}(\sqrt{m})$.

\section{A relaxed quadratic penalty AIPP method}\label{sec:penalty}
 
This section presents the R-QP-AIPP method for solving a class of linearly--set--constrained nonconvex composite optimization problems.
Similar to the QP-AIPP method of \cite{WJRproxmet1}, the R-QP-AIPP method is a quadratic penalty--based method that solves a sequence of penalized subproblems, for increasing values of the penalty parameter, using the R-AIPP method of Section~\ref{sec:AIPPmet}.
The section contains two subsections. The first one describes the main problem of interest, its underlying assumptions, and the notion of a corresponding approximate stationary point which R-QP-AIPP method will provably obtain, and briefly outlines a cold--started quadratic penalty--based method for obtaining such a point.
The second one presents a warm--started quadratic penalty--based method, namely, the R-QP-AIPP method, for obtaining the desired stationary point and establishes its ACG iteration complexity.

\subsection{The linearly--set--constrained problem} \label{subsec:lsc_prob}

This subsection describes the main problem of interest in this section, namely, the linearly--set--constrained nonconvex composite optimization problem \eqref{eq:main_lin_prb},  its underlying assumptions, and a notion of an approximate stationary point of it. Moreover, it describes the quadratic penalty subproblem (parameterized a penalty parameter)
associated with \eqref{eq:main_lin_prb}
and discusses the relationship between their corresponding approximate stationary points.
It then outlines a (static and dynamic) cold--started quadratic penalty--based method and its corresponding iteration-complexity bound, which turns out to be larger than that of the QP-AIPP method of \cite{WJRproxmet1}.


The main problem of interest for this section is the linearly--set--constrained nonconvex composite optimization problem
\begin{equation} \label{eq:main_lin_prb}
\varphi^* := \min \left\{\varphi(z) := f(z) + h(z) : Az \in S, \,  z \in \R^n \right \},
\end{equation}
where closed convex set $S\subseteq \r^p$,  linear operator $A : \rn \mapsto \r^p$,
and functions $f,h:\rn \mapsto (-\infty,\infty]$, satisfy the following assumptions:



\begin{itemize}[align=left]
\item [(C1)]  $h\in \bConv$ and its diameter
\begin{equation}
D_h := \sup \{\|z-z'\|:z,z'\in \dom h\} \label{eq:D_h_def}
\end{equation}
is finite; 
\item [(C2)] $A \ne 0$ and
${\cal F} :=\left\{ z \in \dom h : Az\in S \right\}  \ne \emptyset$;
\item [(C3)] $f$ is a nonconvex differentiable function on $\dom h$ and there exist a scalar $L > 0$ such that 
\begin{align}\label{eq:curvature_penf}
\|\nabla f(z)-\nabla f(u)\| \leq L\|z-u\| \quad \forall u,z\in \dom h;
\end{align}
\item [(C4)] ${\varphi_0^*} := \inf\{\varphi(z):z\in\rn\} > -\infty$.

\end{itemize}

We make two remarks about the above assumptions.
First, Lemma~\ref{lem:lsc} in Appendix~\ref{app:penalty} shows that (C1), (C3), and the additional assumption that $f$ be lower semicontinuous on $\cl(\dom h)$ imply (C4). Second, denoting $\underline{m}$ as the quantity \eqref{eq:m_lower_def} with $g=f$, assumption (C3) implies that $\underline{m} \in (0,L]$. Moreover, it is shown in Theorem~\ref{thm:rqp_aipp_compl} below that the smaller $\underline{m}$ is, the better the iteration complexity of the R-QP-AIPP method becomes.

We now discuss a notion of approximate stationary point for \eqref{eq:main_lin_prb}.
 Clearly, \eqref{eq:main_lin_prb} is equivalent to the problem 
\begin{equation} \label{eq:block_penalty}
\min \left\{ f(z) + h(z) : Az - s = 0, \, s\in S, \,  z \in \R^n \right\}.
\end{equation}
%
%
%
Moreover, a necessary condition for a point $(\hat z, \hat s)\in\dom h\times S$ to be a local minimum to the above problem is that  there exists a multiplier $\hat q\in \r^p$ such that
\begin{equation}\label{eq:alt_optimality}
0 \in \nabla f(\hat z) + \partial h(\hat z) + A^* \hat{q}, \quad A\hat z - \hat s = 0, \quad \hat{q} \in N_S(\hat s), \quad \hat s\in S.
\end{equation}
Given a tolerance pair $(\hat \rho,\hat \eta)\in \Re^2_{++}$, a triple 
$([\hat z, \hat s], \hat{q}, \hat v) \in [\dom h \times S] \times \R ^p \times \R^n$ is said to be a $(\hat \rho,\hat \eta)$--approximate stationary point of  \eqref{eq:probintroa} if it satisfies
\begin{equation}\label{eq:approx_cond_lin_constr}
\hat v \in \nabla f(\hat z) + \partial h(\hat z) + A^* \hat{q},  \quad \|\hat v\| \le \hat \rho, \quad \|A\hat z - \hat s\| \le \hat \eta, \quad \hat{q}\in N_S(\hat s), \quad \hat s\in S.
\end{equation}
Clearly, a $(\hat \rho,\hat \eta)$--approximate stationary point $([\hat z, \hat s], \hat{q}, \hat v)$ of \eqref{eq:main_lin_prb} when $(\hat \rho,\hat \eta) = (0,0)$ means
that the pair $(\hat z, \hat s)$ and the multiplier $\hat q$ satisfy \eqref{eq:alt_optimality}.


We now describe the quadratic penalty subproblem (parameterized by a penalty parameter) with respect to
\eqref{eq:main_lin_prb}. Defining the quadratic penalty function $p_S:\r^p\mapsto\r_{+}$ as
\begin{equation} \label{eq:p_S_def}
p_S(x) := \frac{1}{2} \|x - \Pi_S(x)\|^2
\end{equation} 
where 
\begin{equation} \label{eq:Pi_S}
\Pi_S(x) := \argmin\{\|u - x\| : u \in S\}
\end{equation}
for every $x \in \r^p$,
the quadratic penalty subproblem parameterized by a  penalty parameter $c>0$ with respect to \eqref{eq:main_lin_prb}  is
\begin{equation} \label{eq:pen_sub}
{\varphi}_{c}^*  := \min \left \{ \varphi_c(z) := \varphi(z) + c \cdot p_S(Az) : z \in \Re^n \right \}.
\end{equation}

We now make four remarks regarding \eqref{eq:pen_sub}. First, \eqref{eq:pen_sub_intro} is an instance of \eqref{eq:pen_sub} in which $S=\{b\}$. Second, when $c=0$, the optimal value of \eqref{eq:pen_sub} coincides with
$\varphi^*_0$ in (C4), and hence there is no abuse of notation made here.
Third, it is easily seen that
\begin{equation} \label{eq:pen_topo}
\varphi^* \geq \varphi_{\bar c}^* \geq \varphi_{c}^* \quad \forall \bar c > c \ge 0,
\end{equation}
where $\varphi^*$ is as in \eqref{eq:main_lin_prb}. Finally, \eqref{eq:pen_sub} is a penalty subproblem
 involving only the original variable $z$ of formulation \eqref{eq:main_lin_prb}
rather than the one associated with \eqref{eq:block_penalty}
(constructed as in Section~\ref{sec:intro} with $Az=b$ replaced by $A z - s =0$),
which  involves the pair of variables $(z, s)$.

The following result shows how a $\hat\rho$--approximate stationary point of \eqref{eq:pen_sub} yields a $(\hat \rho, \hat \eta)$--approximate stationary point of \eqref{eq:main_lin_prb} when the penalty parameter $c$ is sufficiently large. 

\begin{proposition} \label{prop:pen_termination}
    Let $(\hat{\rho}, \hat \eta)\in \R_{++}^2$ and $c\geq 0$ be given and suppose that $(\hat{z},\hat{v})$
    is a $\hat\rho$--approximate stationary point of \eqref{eq:pen_sub} as in \eqref{eq:approx_subgrad} with $g=f + c \cdot (p_S \circ A)$. Moreover, let $\underline m$ be as in \eqref{eq:m_lower_def} with $g=f$ and define
    \begin{gather} 
    g_c := f + c \cdot (p_S \circ A), \quad
    M_c := L + c\|A\|^2, \label{eq:pen_aipp_input}
    \\
    \hat s := \Pi_S(A\hat z), \quad
    \hat q := c (A\hat z - \hat s), \quad
    T := 2\left({\varphi}^*-\varphi^*_0+\hat{\rho}D_{h}\right) + \underline{m} D_{h}^{2}, \label{eq:pen_c_def}
    \end{gather}
    where ${\varphi}^*$ and ${\varphi}^*_0$ are as in \eqref{eq:main_lin_prb} and (C4), respectively. Then,
    the following statements hold:
    \begin{itemize}
        \item[(a)] for every $u,z\in\dom h$, the pair $(g,M)=(g_c, M_c)$ satisfies \eqref{eq:curvature};
        \item[(b)] the triple $([\hat z, \hat s], \hat q, \hat v)$ satisfies the inclusions and the first inequality of \eqref{eq:approx_cond_lin_constr} and 
        \begin{equation}
        \frac{c}{2}\|A\hat{z} - \hat s\|^{2}\leq{\varphi}^*-\varphi(\hat{z})+\hat{\rho}D_{h}+\frac{1}{2} \left(\underline{m} D_{h}^{2}\right) ;\label{eq:pen_feas_bd}
        \end{equation}
        \item[(c)] if, in addition, the penalty parameter $c$ satisfies
        \begin{equation}\label{eq:c_bd}
        c\geq \frac{T}{\hat{\eta}^{2}},
        \end{equation} then $\|A\hat{z} - \hat s \|\leq\hat{\eta}$,
        and hence $([\hat z, \hat s], \hat q, \hat v)$ is a $(\hat \rho,\hat \eta)$--approximate stationary point of \eqref{eq:main_lin_prb}.
    \end{itemize}
\end{proposition}

\begin{proof}
    Throughout this proof, we will make use of the well known fact (see, for example, \cite[Theorems 6.39 \& 6.60]{beck2017first}) that $p_S$ is convex, differentiable, its gradient is $1$--Lipschitz, and, for every $x\in\R^p$, 
    \begin{equation} \label{eq:p_S_props}
    \nabla p_S(x)=x-\Pi_S(x)\in N_{S}\left(\Pi_{S}(x)\right).
    \end{equation}
    
    (a) This follows immediately from the definition of $g_c$ in \eqref{eq:pen_aipp_input}, assumption (C3), and the fact that $\nabla p_S$ is 1--Lipschitz continuous.    

    (b) Using the definitions of $\hat q$ and $\hat s$ given in  \eqref{eq:pen_c_def}, and the fact that \eqref{eq:p_S_props} at $x=A\hat z$ implies $ c \nabla p_S(A\hat z) = \hat q$, observe that: (i) $c \nabla (p_S \circ A) \hat z = c A^* \nabla p_S(A\hat z) = A^* \hat q$; 
    and (ii) $\hat q \in N_S(\hat s)$. It now follows from the definition of a $\hat\rho$--approximate stationary point of \eqref{eq:pen_sub} with $g=f + c \cdot (p_S \circ A)$ and the previous observations that 
    \begin{align} \label{eq:grad_p_S_incl}
    \hat{v}  \in \nabla f(\hat{z})+\partial h(\hat{z})+ c \nabla (p_S \circ A) \hat z
    &  = \nabla f(\hat{z})+\partial h(\hat{z})+ A^* \hat q \nonumber \\ 
    & \subseteq \nabla f(\hat{z})+\partial h(\hat{z})+A^{*}N_{S}(\hat{s}).
    \end{align}
    Hence, with the additional fact  that $\|\hat{v}\|\leq\hat{\rho}$ from \eqref{eq:approx_subgrad}, it follows that the inclusions and first inequality of \eqref{eq:approx_cond_lin_constr}
    hold. Next, observe that the convexity of $p_{S}$ and the first inclusion in \eqref{eq:grad_p_S_incl} imply that
    $ \hat{v}\in\nabla f(\hat{z})+\partial\left[h + c \cdot (p_S \circ A)\right](\hat{z})$
    or equivalently, 
    \begin{equation}
    h(u)+c \cdot p_{S}(Au)\geq h(\hat{z})+c \cdot p_{S}(A\hat{z})+\left\langle \hat{v}-\nabla f(\hat{z}),u-\hat{z}\right\rangle \quad\forall u\in\dom h.\label{eq:pen_incl}
    \end{equation}
    Considering \eqref{eq:pen_incl} at any $u \in {\cal F}$ and using the fact that $p_S(Au)=0$ for any $u\in {\cal F}$, the definition of $\underline{m}$ in \eqref{eq:m_lower_def}, and the definitions
    of $p_{S}$ and $\hat s$, we conclude that
    \begin{align*}
    \frac{c}{2}\|A\hat{z} - \hat s \|^{2} & \leq h(u)-h(\hat{z})+\left\langle \nabla f(\hat{z}),u-\hat{z}\right\rangle -\left\langle \hat{v},u-\hat{z}\right\rangle \\
    & \leq(f+h)(u)-(f+h)(\hat{z})+\|\hat{v}\|\|u-\hat{z}\|+\frac{1}{2}\left( \underline{m} \|u-\hat{z}\|^{2} \right)\\
    & \leq{\varphi}(u)-\varphi(\hat{z})+\hat{\rho}D_{h}+\frac{1}{2} \left(\underline{m} D_{h}^{2}\right).
    \end{align*}
    Taking the infimum over $u\in {\cal F}$ immediately implies \eqref{eq:pen_feas_bd}.
    
    (c) Using \eqref{eq:c_bd}, the fact that $\varphi(\hat{z})\geq{\varphi}^*_0$
    , and the definition of $T$, it follows from part (b) that
    \[
    \|A\hat{z} - \hat s\|^{2} \leq \frac{1}{c} \left[ 2\left({\varphi}^*-\varphi^*_0 +\hat{\rho}D_{h}\right)+ \underline{m} D_{h}^{2} \right]=\frac{T}{c}\leq\hat{\eta}^{2}.
    \]
\end{proof}

In view of the above proposition, we now outline a static penalty method for obtaining a $(\hat \rho, \hat \eta)$--approximate
stationary point of \eqref{eq:main_lin_prb}. First, let $z_0 \in \dom h$ be given and  select a penalty parameter $c = {\cal O}(\hat \eta ^{-2})$ satisfying \eqref{eq:c_bd}. Second, obtain a $\hat \rho$--approximate stationary point $(\hat z, \hat v)$ of \eqref{eq:pen_sub} using the R-AIPP method of Section~\ref{sec:AIPPmet} with starting point $z_0$ and inputs $M=M_c$ and $(g,h)=(g_c,h)$, which satisfy assumptions (A1)--(A3) in view of Proposition~\ref{prop:pen_termination}(a) and assumptions $(C1)$ and $(C3)$. Finally, compute the pair $(\hat s, \hat q)$ according to \eqref{eq:pen_c_def} and output the triple $([\hat z, \hat s], \hat q, \hat v)$, which is a $(\hat \rho,\hat \eta)$--approximate stationary point of \eqref{eq:main_lin_prb} in view of Proposition~\ref{prop:pen_termination}(c).
Using \eqref{eq:pen_aipp_input} with $(c, \bar c) = (0,c)$, the definitions in \eqref{eq:pen_aipp_input}, the fact that $c = {\cal O}(\hat\eta ^{-2})$,  and the complexity bound for the
R-AIPP method described in Proposition~\ref{prop:relaxAIPPmethod} with $M=M_c$, it is easy to see that the ACG iteration complexity of the outlined method is
\begin{equation} \label{eq:naive_qp_compl}
{\cal O}\left(\sqrt{M_c + \xi_0}\left[\frac{\sqrt{\xi_0}\left[\varphi_{c}(z_{0})-\varphi^*_{0}\right]}{\hat{\rho}^{2}}+\sqrt{\lam_0}\right]\log_{1}^{+}\left(\lam_0 \left[ M_{c}  + \xi_0 \right]\right)\right)={\cal O}\left(\hat{\rho}^{-2}\hat{\eta}^{-3}\log_{1}^{+}\hat{\eta}^{-1}\right),
\end{equation}
where $\xi_0 := \max\{1/\lam_0, 4\underline{m}\}$ and the last quantity ignores any constants aside from the tolerances.
A drawback of this static penalty method is that it requires in its first step the selection of a single parameter $c$, which is generally difficult to obtain. This issue can be circumvented by 
considering a dynamic cold--started penalty method in which the static  penalty method is repeated for a sequence of increasing values of $c$ and common starting point $z_0$. It can be shown that the resulting cold--started dynamic penalty method has an ACG iteration complexity that is still on the same order as \eqref{eq:naive_qp_compl}.
Note that the bound \eqref{eq:naive_qp_compl} is actually
${\cal O}(\hat{\rho}^{-2}\hat{\eta}^{-1}\log_{1}^{+}\hat{\eta}^{-1})$ when $z_0 \in {\cal F}$
(see (C2))
but our interest lies in the case where $z_0 \notin {\cal F}$ since an initial point $z_0 \in {\cal F}$ is generally not known.

The QP-AIPP method of \cite{WJRproxmet1} is a modified cold--started dynamic penalty method like the one just outlined, but which replaces the R-AIPP method called in step~2 of the static penalty method with the AIPP method of \cite{WJRproxmet1}. It has been shown in \cite[Theorem 18]{WJRproxmet1} that its ACG iteration complexity bound for finding a $(\hat\rho, \hat\eta)$--approximate stationary point of \eqref{eq:probintroa} is ${\cal O}(\hat \rho ^{-2} \hat \eta^{-1})$. This bound is established without assuming that $\dom h$ is bounded and is clearly better than the one in \eqref{eq:naive_qp_compl}.

The next subsection considers a warm--started dynamic penalty method, similar to the one described immediately after Proposition~\ref{prop:pen_termination}, in which the input $z_0$ to the R-AIPP call for solving the next penalty subproblem is chosen to be the output $\hat z$ from the R-AIPP call for solving the current one. It is shown in Theorem~\ref{thm:rqp_aipp_compl} of Subsection~\ref{subsec:r_qp_aipp} that its ACG iteration complexity is ${\cal O}(\hat \rho ^{-2} \hat \eta^{-1} \log_1^+ \hat \eta ^{-1})$,  which is the same as the one for the QP-AIPP method up to a logarithmic factor. As a side remark, we note that although a warm--started version of the QP-AIPP method in \cite{WJRproxmet1}
can be also considered, the aforementioned ${\cal O}(\hat \rho ^{-2} \hat \eta^{-1})$ ACG iteration complexity bound was derived
for its cold--started version.

\subsection{The R-QP-AIPP method}\label{subsec:r_qp_aipp}

The goal of this subsection is to describe the R-QP-AIPP method, i.e.,  the warm--started dynamic penalty method mentioned at the end of Subsection~\ref{subsec:lsc_prob}, and establish its corresponding ACG iteration complexity.

We start by describing the R-QP-AIPP method.

\noindent\begin{minipage}[t]{1\columnwidth}
\noindent\rule[0.5ex]{1\columnwidth}{1pt}

\noindent \textbf{R-QP-AIPP method.}

\noindent\rule[0.5ex]{1\columnwidth}{1pt}
\end{minipage}

\vspace*{0.5em}

\noindent {\bf Input:} a problem instance of the form in \eqref{eq:main_lin_prb}, a scalar $L>0$, a tolerance pair $(\hat\rho, \hat\eta) \in \r_{++}^2$, an initial point $\hat{z}_0 \in \dom h$, a scalar $\lam_0 > 0$, and a pair of parameters  $(\theta,\tau) \in   (2,\infty)\times (0, \infty)$;

\vspace*{1em}

\noindent {\bf Output:} a triple $([\hat z, \hat s], \hat q, \hat v) \in [\dom h \times S] \times \r^p \times \rn $ satisfying \eqref{eq:approx_cond_lin_constr};

\begin{itemize}
\item [(0)] set $c_0:=L/\|A\|^2$ and $l = 1$;
\item [(1)] set $(c, z_0) := (c_{l-1}, \hat{z}_{l-1})$ and 
\begin{equation*}
M_{c} := L + c\|A\|^2, \quad g_{c} := f + c \cdot (p_S \circ A);
\end{equation*}
call the R-AIPP method on \eqref{eq:PenProb2Introa} with inputs $\hat \rho$, $M_{c}$, $(g_{c},h)$, $z_0$, $\lam_0$, and $(\theta, \tau)$, 
to obtain a $\hat \rho$-approximate stationary point  $(\hat z,\hat v)$ of \eqref{eq:PenProb2Introa}, and set 
\[
(\hat z_l, \hat v_l) = (\hat z, \hat v),\quad \hat s_l = \Pi_S (A \hat z_l), \quad \hat q_l = c (A\hat z_l - \hat s_l);
\]
\item [(2)]
if the residual
\[
\|A\hat z_l - \hat s_l \| \leq \hat\eta,
\] then return $([\hat z, \hat s], \hat q, \hat v) = ([\hat z_l, \hat s_l], \hat q_l, \hat v_l)$; otherwise, set  $c_l = 2 c_{l-1}$, increment $l = l +1$, and go to step 1.
\end{itemize} 
\noindent\rule[0.5ex]{1\columnwidth}{1pt}

\gap 

Before giving some remarks about the above method, we discuss its general structure. Every loop of the  R-QP-AIPP method  invokes in its step 1 the R-AIPP  method of Section~\ref{sec:AIPPmet} to compute a
$\hat \rho$-approximate stationary point
of the current penalty subproblem \eqref{eq:pen_sub}. The latter method in turn uses the R-ACG algorithm of Section~\ref{sec:Nesterov's-Method} as a subroutine in its implementation
(see step~1 of the R-AIPP method). Moreover, step 1 of the R-QP-AIPP implements a warm--start strategy, namely, the input point $z_0$ of the current  R-AIPP call is set to be the output point $\hat z_{l-1}$ of the previous R-AIPP call.

We now make three remarks about the R-QP-AIPP method. First, it follows from Proposition~\ref{prop:pen_termination}(b) that, for every $l\geq 1$, the triple $([\hat z, \hat s], \hat q, \hat v) = ([\hat z_l, \hat s_l], \hat q_l, \hat v_l)$ satisfies the inclusions and the first inequality in \eqref{eq:approx_cond_lin_constr}.
Second, since every loop of the R-QP-AIPP method doubles $c$, the condition \eqref{eq:c_bd} will be eventually satisfied. Hence, in view of Proposition~\ref{prop:pen_termination}(c), the pair $(\hat z, \hat s)$ corresponding to this $c$ will satisfy the condition $\|A\hat z - \hat s\| \le \hat \eta$ and the
R-QP-AIPP method will stop in view of its stopping criterion in step 2.
Finally, in view of the first and second remarks, we conclude that the R-QP-AIPP method outputs a triple $([\hat z, \hat s], \hat q, \hat v)$ satisfying \eqref{eq:approx_cond_lin_constr}.

Before deriving the ACG iteration complexity of the R-QP-AIPP method, we note that the number of ACG iterations needed in the $(l+1)^{\rm th}$ execution of its step 1 depends on the quantity $\varphi_{c_{l}}(\hat{z}_{l})-{\varphi}^*_{c_{l}}$ (see the left--hand--side of \eqref{eq:naive_qp_compl} with $(c,z_0)=(c_{l}, \hat{z}_{l})$). The result below shows that the warm--start strategy in step 1 of the method together with the boundedness of $\dom h$ imply that the aforementioned quantity has an upper bound that is independent of the size of the parameter $c_{l}$.

\begin{lemma} \label{lem:poten_bd}
    Let $c_0$ and $\hat{z}_0$ be as in step 0 and the input of the R-QP-AIPP method, respectively, and define 
    \begin{equation}
    S_0:=\varphi_{c_{0}}(\hat{z}_{0})-\varphi^*_{c_{0}},\quad Q_0 := T +  S_0,\label{eq:SQ_def}
    \end{equation}
    where  ${\varphi}^*_{c}$ and $T$ are as in \eqref{eq:pen_sub} and \eqref{eq:pen_c_def}, respectively. Then, for every $l \geq 0$, we have  
    \begin{align} \label{eq:poten_bd}
    \varphi_{c_{l}}(\hat{z}_{l})-{\varphi}^*_{c_{l}}\leq Q_0.
    \end{align}
\end{lemma}

\begin{proof}
The case in which $l=0$ follows trivially from the definition of $S_0$ in (\ref{eq:SQ_def}). Consider now the case in which $l\geq1$. Remark that $c_l = 2c_{l-1}$ due to step 2 of R-QP-AIPP and \eqref{eq:pen_sub} and that $(\hat{z}_{l}, \hat{v}_{l})$ is a $\hat\rho$--approximate stationary point of \eqref{eq:pen_sub} with $c=c_{l-1}$ due to the warm--start strategy in  step 1 of the R-QP-AIPP method. It now follows from the aforementioned remarks, the last inequality in \eqref{eq:pen_topo} with $c = c_l$, and Proposition~\ref{prop:pen_termination}(b) with
$(\hat{z},c)=(\hat{z}_{l},c_{l-1})$, that
\begin{align}
\varphi_{c_{l}}(\hat z_{l})-{\varphi}^*_{c_{l}} & \leq\varphi_{c_{l}}(\hat z_{l})-{\varphi}^*_0 = \varphi(\hat z_{l})+
2\left[\frac{c_{l-1}}{2}\|A\hat{z}_{l} - \hat s_{l}\|^2\right] - {\varphi}^*_0 \nonumber \\
&\leq\varphi(\hat z_{l}) + 2\left[{\varphi}^*-\varphi(\hat{z}_{l})+\hat{\rho}D_{h}+\frac{1}{2} \left(\underline{m} D_{h}^{2}\right)\right] -{\varphi}^*_0. \label{eq:poten_bd1}
\end{align}
Grouping terms in the last expression together, using the definition of $Q_0$ given in \eqref{eq:SQ_def}, and the fact that $\varphi(\hat z_l) \geq \varphi_0^*$, we conclude that
\begin{align}
\varphi(\hat z_{l}) + 2\left[{\varphi}^*-\varphi(\hat{z}_{l})+\hat{\rho}D_{h}+\frac{1}{2} \left(\underline{m} D_{h}^{2}\right) \right] -{\varphi}^*_0 \leq 2\left({\varphi}^* - {\varphi}^*_0 +  \hat{\rho}D_{h}\right)+{\underline{m} D_{h}^{2}} = T \leq Q_0. \label{eq:poten_bd2}
\end{align}    
Combining \eqref{eq:poten_bd1} and \eqref{eq:poten_bd2} yields \eqref{eq:poten_bd}.
\end{proof}

The following result establishes the iteration complexity of the R-QP-AIPP method with respect to the inputs $L, \lam_0,$ and $z_0$, the quantity $\underline{m}$ in \eqref{eq:m_lower_def} with $g=f$, and the tolerance pair $(\hat\rho, \hat\eta)$.

\begin{theorem} \label{thm:rqp_aipp_compl}
    Given a tolerance pair $(\hat{\rho},\hat{\eta})\in\r_{+}^{2}$,  define
    \begin{equation}
    \Xi_{\hat \eta} := L + \frac{T\|A\|^{2}}{\hat{\eta}^{2}} , 
    \label{eq:pen_compl_const}
    \end{equation}
    where $T$ is given in \eqref{eq:pen_c_def}. Then, defining $\xi_0 := \max\{1/\lam_0, 4\underline{m}\}$, the R-QP-AIPP method outputs a $(\hat\rho, \hat\eta)$--approximate stationary point $([\hat z, \hat s], \hat q, \hat v)$ of \eqref{eq:main_lin_prb} in at most
    \begin{equation}
    {\cal O}\left(\sqrt{\Xi_{\hat \eta} + \xi_0}\left[\frac{\sqrt{\xi_0} Q_0}{\hat{\rho}^{2}}+\sqrt{\lam_0}\right] \log_1^+\left({\lam_0} \left[\Xi_{\hat \eta} + \xi_0\right]\right)\right),
    \label{eq:r_qp_aipp_compl}
    \end{equation}
    ACG iterations, where  $Q_0$ is as in \eqref{eq:SQ_def}.
\end{theorem}

\begin{proof}
Define $T_{\hat \eta} := T / \hat \eta ^ 2$ and let $\bar{l}$ be the smallest index such that
$c_{\bar{l}-1}\geq T_{\hat{\eta}}$. Since the R-QP-AIPP invokes the R-AIPP method with $(M, g ) = (M_{c_{l-1}}, g_{c_{l-1}})$, it follows from Lemma~\ref{lem:poten_bd} and Proposition~\ref{prop:relaxAIPPmethod}, with $M=M_{c_{l-1}}$,
that the total number of ACG iterations at the $l^{\rm th}$ iteration of
the R-QP-AIPP method is on the order of
\begin{equation}\label{eq:qp_compl_stage}
{\cal O}\left(\sqrt{M_{c_{l-1}} + \xi_0}\left[\frac{ \sqrt{\xi_0} Q_0}{\hat{\rho}^{2}}+\sqrt{\lam_0}\right]\log_1^+\left({\lam_0} \left[M_{c_{l-1}} + \xi_0\right]\right)\right).
\end{equation}
Hence, the R-QP-AIPP method stops in a total number of ACG iterations bounded above by the sum of the quantity in \eqref{eq:qp_compl_stage} over $l=1,\ldots,\bar{l}$.

We now focus on simplifying some of the quantities in the aforementioned sum. Using the fact that $L=c_{0}\|A\|^{2}$, we obtain the bound
\begin{equation} \label{eq:pen_curv_bd}
M_{c_{l-1}}=L+c_{l-1}\|A\|^{2}=L+2^{l-1}c_{0}\|A\|^{2}\leq2^{l-1}\left(L+c_{0}\|A\|^{2}\right)= 2^{l}c_{0}\|A\|^{2}.
\end{equation}
Now, if $\bar l = 1$, then the above inequality implies that $M_{c_{\bar l-1}} \leq 2 c_0 \|A\|^2 = 2 L = {\cal O}\left(\Xi_{\hat \eta}\right)$.
Assume then that $\bar l\geq 2$. Observe that the definition of $\bar{l}$ implies that $2^{\bar{l}-1}c_{0}\leq2T_{\hat{\eta}}$ or, equivalently, $\sqrt{c_{0}}\sqrt{2}^{\bar{l}}\leq2\sqrt{T_{\hat{\eta}}}$.
Combining the previous inequality with \eqref{eq:pen_curv_bd}, we conclude that
\begin{align}
& \sum_{k=1}^{\bar{l}} \sqrt{M_{c_{l-1}} + \xi_0 } 
\leq \sum_{k=1}^{\bar{l}}\sqrt{2^{l}c_{0}\|A\|^{2} + \xi_0} \leq\sqrt{2}^{\bar{l}}\left(1+\sqrt{2}\right)\sqrt{2c_{0}\|A\|^2 + \xi_0} \nonumber \\
& \le 8\sqrt{\|A\|^{2}T_{\hat{\eta}} + \xi_0}={\cal O}\left(\sqrt{\Xi_{\hat \eta} + \xi_0}\right)\label{eq:lam_Mk_bd1}
\end{align}
and also 
\begin{align}
&\log_{1}^{+}\left(M_{c_{l-1}} + \xi_0 \right) 
\leq\log_{1}^{+}\left(2^{\bar{l}}c_{0}\|A\|^{2} + \xi_0 \right) 
\leq \log_{1}^{+}\left(4T_{\hat{\eta}}\|A\|^{2}+ \xi_0 \right)
= {\cal O}\left(\log_{1}^{+}\left[\Xi_{\hat \eta} + \xi_0 \right]\right).\label{eq:lam_Mk_bd2}
\end{align}
It now follows from \eqref{eq:qp_compl_stage}, \eqref{eq:lam_Mk_bd1}, and \eqref{eq:lam_Mk_bd2} that the R-QP-AIPP method stops in a total number of
ACG iterations bounded by the quantity in \eqref{eq:r_qp_aipp_compl}.
    
The statement that $([\hat z, \hat s], \hat q, \hat v)$ is a $(\hat \rho, \hat \eta)$--approximate stationary point follows from Proposition~\ref{prop:pen_termination}(b) and the termination condition in step~2 of the R-QP-AIPP method.
\end{proof}

We now make three remarks about the complexity bound in \eqref{eq:r_qp_aipp_compl}.
First, in terms of the tolerance pair $(\hat \rho, \hat \eta)$, it is ${\cal O}(\hat \rho ^{-2} \hat \eta^{-1} \log_1^+ \hat \eta ^{-1})$, which improves upon the complexity in \eqref{eq:naive_qp_compl} by a $\Theta (\hat \eta ^{-2})$ factor. 
Second, unless $\lam_0$ is large or $\underline{m}$ is small, the first term in \eqref{eq:r_qp_aipp_compl} dominates the second one. 
    
We now end this section by discussing some possible choices of the initial stepsize $\lam_0$ and how the corresponding R-QP-AIPP instances compare to the QP-AIPP method of \cite{WJRproxmet1}.
First, recall that the QP-AIPP method requires the knowledge of an upper bound $m$ on $\underline{m}$ such that $m={\cal O}(L)$, and remark that, under the same assumptions of this paper, it can be shown using \cite[Theorem~18]{WJRproxmet1} that its ACG iteration complexity is
\begin{align}\label{eq:qp_aipp_compl}
{\cal O}\left(\sqrt{\Xi_{\hat\eta}}\left[\frac{ \sqrt{m} {Q}_0} {\hat{\rho}^2}+\sqrt{\frac{1}{m}}\log_{1}^{+}\left(\frac{\Xi_{\hat\eta}}{m}\right)\right]\right).
\end{align}
Now, if $m$ as above is also known to the R-AIPP and the input $\lam_0$ is set to $1/(4m)$, then the ACG iteration complexity \eqref{eq:r_qp_aipp_compl} reduces to 
\begin{equation}\label{eq:r_qp_aipp_spec_compl1}
{\cal O}\left(  \sqrt{\Xi_{\hat\eta}} \left[ \frac{\sqrt{m} Q_0} {\hat{\rho}^{2}} + \sqrt{\frac{1}{m}} \right] \log_1^+\left(\frac{\Xi_{\hat\eta}}{m}\right) \right),
\end{equation}
which is the same as \eqref{eq:r_qp_aipp_compl} up to a logarithmic factor. On the other hand, if $\lam_0$ is chosen so that $1/\lam_0 = {\cal O}(\underline{m})$ then \eqref{eq:r_qp_aipp_compl} reduces to 
\begin{equation}\label{eq:r_qpaipp_spec_compl2}
{\cal O}\left(  \sqrt{\Xi_{\hat\eta}} \left[ \frac{\sqrt{\underline{m}} Q_0} {\hat{\rho}^{2}} + \sqrt{\lam_0} \right] \log_1^+\left(\lam_0 \Xi_{\hat\eta}\right) \right),
\end{equation}
whose dominant first term is as good as the dominant first term in \eqref{eq:qp_aipp_compl} when $\sqrt{\underline{m}} \log_1^+(\lam_0 \Xi_{\hat\eta}) = {\cal O}(\sqrt{m})$.

\section{Numerical experiments\label{sec:numerical}}

This section presents computational results that highlight the performance of the R-AIPP and R-QP-AIPP methods. It contains three subsections. The first subsection compares three variants of the R-AIPP method against three state-of-the-art nonconvex composite optimization algorithms. The second subsection uses the six algorithms in the first subsection as subroutines in a quadratic penalty method similar to the one in Section~\ref{sec:penalty}. More specifically, given an algorithm $A$ out of the six algorithms in the first subsection, a corresponding quadratic penalty method is considered in which steps~0 to 2 of the R-QP-AIPP method in Section~\ref{sec:penalty} are executed with algorithm $A$ replacing the R-AIPP method. The third subsection presents a summary of the numerical experiments.

We first describe the three different R-AIPP variants considered. While the second variant does not assume knowledge of an upper bound $m$ on the quantity $\underline{m}$ in \eqref{eq:m_lower_def}, the first and third variants do in order to determine their initial stepsize $\lam_0$. More specifically, the first variant, referred to as R-AIPPc, is the R-AIPP method
with initial stepsize chosen to be $\lam_0 =0.9/(2m)$. As opposed
to the two algorithms explained below, which can adaptively change
$\lam_{k}$ between iterations, this algorithm is a constant stepsize
method (see Lemma~\ref{lem1:adaptiveMet} and the paragraph following
it). The second variant, referred to as R-AIPPv1, is the R-AIPP
method with initial stepsize chosen to be $\lam_0 =1$. Since
$\lam_0$ is relatively large in the experiments considered,
$\lambda$ is halved in some of its outer iterations. The third variant,
referred to as R-AIPPv2, is a variant of the R-AIPP method with initial
stepsize chosen to be $\lam_0 =1/(5m)$. This variant modifies
the R-AIPP method by adding conditions that allow the stepsize $\lambda$
to increase between subproblems. More specifically, the R-AIPPv2 method
doubles the value of $\lambda$ at the end of iteration $k$ when:
(a) $\lambda$ has never been halved in step 1 or 2 and (b) the number
of inner iterations performed by the R-ACG algorithm in step~1 is
less than 250. All R-AIPP variants are run with $\theta=4$, a problem--specific
value of $\tau$, and adaptively estimate the constant $\widetilde{M}$ that is used in each iteration of the R-ACG algorithm.

We now make three remarks about the above R-AIPP variants and the AIPP method of \cite{WJRproxmet1}. First, while both the R-AIPPc and AIPP method choose the stepsizes $\{\lambda_k\}$ to be constant, the former method differs from the latter one in that it uses a more relaxed criterion, i.e., \eqref{eq:bd_prox-approx} and \eqref{eq:eps_gsm_bd}, for solving the $k^{\rm th}$ prox subproblem \eqref{eq:penPbRegIntroa}. Moreover, the limited numerical experiments in Appendix~\ref{app:comp_aipp} show that this relaxation drastically improves upon the efficiency of the AIPP method, regardless of the magnitude of the ratio $M/m$. As we believe that this effect would observed in the other problem instances of this section, we choose not to include the AIPP method as part of our suite of benchmark algorithms for the sake of brevity.
Second, the R-AIPPv1 and R-AIPPv2 methods differ from the R-AIPPc method in that they permit the stepsizes $\{\lam_k\}$ to be significantly larger than the constant ones chosen for the R-AIPPc method. As will be observed in the numerical experiments below, this can drastically improve the efficiency of the adaptive stepsize R-AIPP variants.
Third, in view of the descriptions of the R-AIPP variants in the previous paragraph, both the R-AIPPc and R-AIPPv1 methods are instances of the R-AIPP method while the R-AIPPv2 method is not. However, the R-AIPPv2 method is clearly an instance of the GD framework, and hence a similar analysis to the one in Section~\ref{sec:AIPPmet} may be used to establish its ACG iteration complexity. For sake of brevity we omit its analysis in this paper.


We now describe the three other nonconvex composite optimization algorithms
considered. The first algorithm is an implementation of the unified
problem-parameter free accelerated gradient (UPFAG) method that is
proposed and analyzed in \cite{Ghadimi2019}. The particular implementation
considered is the UPFAG-fullBB method, which utilizes a Barzilai--Borwein
type stepsize selection strategy and is described in \cite[Section 4]{Ghadimi2019}.
Its input parameters include $(\gamma_{1},\gamma_{2},\gamma_{3})=(0.4,0.4,1.0)$
and $(\delta,\sigma)=(10^{-2},10^{-10})$. The second algorithm is
an implementation of the NC-FISTA method in \cite{liang2019fistatype}.
The particular implementation considered uses input parameters $(\xi,\lam)=(1.05m,0.99/M)$.
The third algorithm is an implementation of the accelerated gradient
(AG) method that is proposed and analyzed in \cite{nonconv_lan16}.
The particular implementation considered is Algorithm 2, which is
described in \cite[Section 2]{nonconv_lan16}. 

Finally, we state some additional details about the numerical experiments.
First, for each linearly--set--constrained problem of the form given
in \eqref{eq:main_lin_prb}, the quadratic penalty method used to
solve it starts with the initial penalty parameter chosen to be $c_{0}=\max\{10^{-10},(1000m-L)/\|A\|^{2}\}$.
Second, each algorithm is run with a time limit of 4000 seconds. If
an algorithm does not terminate with a solution for a particular problem
instance, we do not report any details about its iteration count or
function value at the point of termination and the runtime for that
instance is marked with a {[}{*}{]} symbol. Third, the iterations
listed in the tables this section include backtracking iterations
if a parameter line search method is used as part of the algorithm.
Finally, all algorithms described at the beginning of this section
are implemented in MATLAB 2019a and are run on Linux 64-bit machines
each containing Xeon E5520 processors and at least 8 GB of memory.

\subsection{Unconstrained problems}

This subsection examines the performance of the R-AIPP method as a
nonconvex composite optimization solver for solving problems of the
form given in \eqref{eq:PenProb2Introa}. Given a function pair $(g,h)$
satisfying assumptions (A1)--(A3) with $\phi=g+h$, tolerance $\hat{\rho}>0$,
and an initial point $z_{0}\in\dom h$, each algorithm seeks a pair
$(\hat{z},\hat{v})$ satisfying 
\begin{equation}
\hat{v}\in\nabla g(\hat{z})+\pt h(\hat{z}),\quad\frac{\|\hat{v}\|}{\|\nabla g(z_{0})\|+1}\leq\hat{\rho}.\label{eq:term_unconstr}
\end{equation}
Two problems are considered, namely: (i) the quadratic matrix problem;
and (ii) the support vector machine problem in \cite{Ghadimi2019}. 

All methods that terminated within 4000 seconds converged to the same
objective value, which, for each table in this subsection, is given
in a column labeled $\phi(\hat{z})$. The bold numbers in each of
the aforementioned tables highlight the algorithm that performed the
most efficiently in terms of iteration count or total runtime. 

\subsubsection{Quadratic matrix problem} \label{subsubsec:qm_prb}

Given a pair of dimensions $(l,n)\in\n^{2}$, scalar pair $(\alpha_{1},\alpha_{2})\in\r_{++}^{2}$,
linear operators ${\cal B}:S_{+}^{n}\mapsto\r^{n}$ and ${\cal C}:S_{+}^{n}\mapsto\r^{l}$
defined by
\[
\left[{\cal B}(z)\right]_{j}=\left\langle B_{j},z\right\rangle _{F},\quad\left[{\cal C}(z)\right]_{i}=\left\langle C_{i},z\right\rangle _{F},
\]
for matrices $\{B_{j}\}_{j=1}^{n},\{C_{i}\}_{i=1}^{l}\subseteq\r^{n\times n}$
, positive diagonal matrix $D\in\r^{n\times n}$, and vector $d\in\r^{l}$,
this sub--subsection considers the following quadratic matrix (QM)
problem: 
\begin{align*}
\min_{z}\  & \frac{\alpha_{1}}{2}\|{\cal C}(z)-d\|^{2}-\frac{\alpha_{2}}{2}\|D{\cal B}(z)\|^{2}\\
\text{s.t.}\  & z\in P_{n},
\end{align*}
where $P_{n}=\{z\in S_{+}^{n}:\trc z=1\}$ denotes the $n$--dimensional
spectraplex.

We now describe the experiment parameters for the instances considered.
First, the dimensions were set to be $(l,n)=(50,200)$ and only 2.5\%
of the entries of the submatrices $B_{j}$ and $C_{i}$ being nonzero.
Second, the entries of $B_{j},C_{i}$, and $d$ (resp., $D$) are
generated by sampling from the uniform distribution ${\cal U}[0,1]$
(resp., ${\cal U}[1,1000]$). Third, the initial starting point is
$z_{0}=I_{n}/n$, where $I_{n}$ is the $n$-dimensional identity
matrix. Fourth, with respect to the termination criterion \eqref{eq:term_unconstr},
the inputs, for every $z\in S_{+}^{n}$, are 
\[
g(z)=\frac{\alpha_{1}}{2}\|{\cal C}(z)-d\|^{2}-\frac{\alpha_{2}}{2}\|D{\cal B}(z)\|^{2},\quad h(z)=\delta_{P_{n}}(z),\quad\hat{\rho}=10^{-7}.
\]
Fifth, the R-AIPP variants used a parameter value of $\tau=10000$.
Finally, each problem instance considered is based on a specific curvature
pair $(m,M)\in\r_{++}^{2}$ for which the scalar pair $(\alpha_{1},\alpha_{2})\in\r_{++}^{2}$
is selected so that $M=\lambda_{\max}(\nabla^{2}g)$ and $-m=\lambda_{\min}(\nabla^{2}g)$.

We now present the numerical tables for this set of problem instances.
We start with instances in which $m$ is fixed.

\begin{table}[H]
    \begin{centering}
        \makebox[\textwidth][c]{%
            \begin{tabular}{|>{\centering}m{0.7cm}>{\centering}m{0.7cm}|>{\centering}p{1.8cm}|>{\centering}p{1.7cm}>{\centering}p{1.7cm}>{\centering}p{1.7cm}>{\centering}p{1.7cm}>{\centering}p{1.7cm}>{\centering}p{1.7cm}|}
                \hline 
                \multirow{2}{0.7cm}{\centering{}{\footnotesize{}$M$}} & \multirow{2}{0.7cm}{\centering{}{\footnotesize{}$m$}} & \multirow{2}{1.8cm}{\centering{}{\footnotesize{}$\phi(\hat{z})$}} & \multicolumn{6}{c|}{{\small{}Iteration Count}}\tabularnewline
                &  &  & {\footnotesize{}UPFAG} & {\footnotesize{}NC-FISTA} & {\footnotesize{}AG} & {\footnotesize{}R-AIPPc} & {\footnotesize{}R-AIPPv1} & {\footnotesize{}R-AIPPv2}\tabularnewline
                \hline 
                {\footnotesize{}$10^{2}$} & {\footnotesize{}$10^{0}$} & {\footnotesize{}--1.74E--02} & {\footnotesize{}3892} & {\footnotesize{}2045} & {\footnotesize{}8670} & {\footnotesize{}8266} & {\footnotesize{}7627} & \textbf{\footnotesize{}1093}\tabularnewline
                {\footnotesize{}$10^{4}$} & {\footnotesize{}$10^{0}$} & {\footnotesize{}3.67E--01} & {\footnotesize{}9809} & {\footnotesize{}8642} & {\footnotesize{}7064} & {\footnotesize{}3250} & {\footnotesize{}3691} & \textbf{\footnotesize{}1185}\tabularnewline
                {\footnotesize{}$10^{6}$} & {\footnotesize{}$10^{0}$} & {\footnotesize{}3.84E+01} & {\footnotesize{}23388} & {\footnotesize{}11861} & {\footnotesize{}7039} & {\footnotesize{}1270} & {\footnotesize{}1268} & \textbf{\footnotesize{}1174}\tabularnewline
                \hline 
        \end{tabular}}
        \par\end{centering}
    \caption{Iteration counts for QM problems with fixed $m$.}\label{tab:high_ratio_1}
\end{table}

\begin{table}[H]
    \begin{centering}
        \makebox[\textwidth][c]{%
            \begin{tabular}{|>{\centering}m{0.7cm}>{\centering}m{0.7cm}|>{\centering}p{1.8cm}|>{\centering}p{1.7cm}>{\centering}p{1.7cm}>{\centering}p{1.7cm}>{\centering}p{1.7cm}>{\centering}p{1.7cm}>{\centering}p{1.7cm}|}
                \hline 
                \multirow{2}{0.7cm}{\centering{}{\footnotesize{}$M$}} & \multirow{2}{0.7cm}{\centering{}{\footnotesize{}$m$}} & \multirow{2}{1.8cm}{\centering{}{\footnotesize{}$\phi(\hat{z})$}} & \multicolumn{6}{c|}{{\small{}Runtime (seconds)}}\tabularnewline
                &  &  & {\footnotesize{}UPFAG} & {\footnotesize{}NC-FISTA} & {\footnotesize{}AG} & {\footnotesize{}R-AIPPc} & {\footnotesize{}R-AIPPv1} & {\footnotesize{}R-AIPPv2}\tabularnewline
                \hline 
                {\footnotesize{}$10^{2}$} & {\footnotesize{}$10^{0}$} & {\footnotesize{}--1.74E--02} & {\footnotesize{}511.91} & {\footnotesize{}151.58} & {\footnotesize{}880.19} & {\footnotesize{}935.50} & {\footnotesize{}981.49} & \textbf{\footnotesize{}119.87}\tabularnewline
                {\footnotesize{}$10^{4}$} & {\footnotesize{}$10^{0}$} & {\footnotesize{}3.67E--01} & {\footnotesize{}1304.09} & {\footnotesize{}683.95} & {\footnotesize{}687.16} & {\footnotesize{}287.37} & {\footnotesize{}334.41} & \textbf{\footnotesize{}106.54}\tabularnewline
                {\footnotesize{}$10^{6}$} & {\footnotesize{}$10^{0}$} & {\footnotesize{}3.84E+01} & {\footnotesize{}2804.38} & {\footnotesize{}774.74} & {\footnotesize{}615.91} & {\footnotesize{}100.27} & {\footnotesize{}102.29} & \textbf{\footnotesize{}94.16}\tabularnewline
                \hline 
        \end{tabular}}
        \par\end{centering}
    \caption{Runtimes for QM problems with fixed $m$.}\label{tab:high_ratio_2}
\end{table}

\noindent We now present instances where $m=M$.

\begin{table}[H]
    \begin{centering}
        \makebox[\textwidth][c]{%
            \begin{tabular}{|>{\centering}m{0.7cm}>{\centering}m{0.7cm}|>{\centering}p{1.8cm}|>{\centering}p{1.7cm}>{\centering}p{1.7cm}>{\centering}p{1.7cm}>{\centering}p{1.7cm}>{\centering}p{1.7cm}>{\centering}p{1.7cm}|}
                \hline 
                \multirow{2}{0.7cm}{\centering{}{\footnotesize{}$M$}} & \multirow{2}{0.7cm}{\centering{}{\footnotesize{}$m$}} & \multirow{2}{1.8cm}{\centering{}{\footnotesize{}$\phi(\hat{z})$}} & \multicolumn{6}{c|}{{\small{}Iteration Count}}\tabularnewline
                &  &  & {\footnotesize{}UPFAG} & {\footnotesize{}NC-FISTA} & {\footnotesize{}AG} & {\footnotesize{}R-AIPPc} & {\footnotesize{}R-AIPPv1} & {\footnotesize{}R-AIPPv2}\tabularnewline
                \hline 
                {\footnotesize{}$10^{2}$} & {\footnotesize{}$10^{2}$} & {\footnotesize{}-2.06E+01} & {\footnotesize{}18} & {\footnotesize{}38} & {\footnotesize{}79} & {\footnotesize{}161} & \textbf{\footnotesize{}10} & {\footnotesize{}20}\tabularnewline
                {\footnotesize{}$10^{4}$} & {\footnotesize{}$10^{4}$} & {\footnotesize{}-2.06E+03} & {\footnotesize{}19} & {\footnotesize{}39} & {\footnotesize{}80} & {\footnotesize{}217} & \textbf{\footnotesize{}7} & {\footnotesize{}21}\tabularnewline
                {\footnotesize{}$10^{6}$} & {\footnotesize{}$10^{6}$} & {\footnotesize{}-2.06E+05} & {\footnotesize{}19} & {\footnotesize{}39} & {\footnotesize{}80} & {\footnotesize{}175} & \textbf{\footnotesize{}8} & {\footnotesize{}20}\tabularnewline
                \hline 
        \end{tabular}}
        \par\end{centering}
    \caption{Iteration counts for QM problems with $m=M$.}
    
    \label{tab:low_ratio_1}
\end{table}

\begin{table}[H]
    \begin{centering}
        \makebox[\textwidth][c]{%
            \begin{tabular}{|>{\centering}m{0.7cm}>{\centering}m{0.7cm}|>{\centering}p{1.8cm}|>{\centering}p{1.7cm}>{\centering}p{1.7cm}>{\centering}p{1.7cm}>{\centering}p{1.7cm}>{\centering}p{1.7cm}>{\centering}p{1.7cm}|}
                \hline 
                \multirow{2}{0.7cm}{\centering{}{\footnotesize{}$M$}} & \multirow{2}{0.7cm}{\centering{}{\footnotesize{}$m$}} & \multirow{2}{1.8cm}{\centering{}{\footnotesize{}$\phi(\hat{z})$}} & \multicolumn{6}{c|}{{\small{}Runtime (seconds)}}\tabularnewline
                &  &  & {\footnotesize{}UPFAG} & {\footnotesize{}NC-FISTA} & {\footnotesize{}AG} & {\footnotesize{}R-AIPPc} & {\footnotesize{}R-AIPPv1} & {\footnotesize{}R-AIPPv2}\tabularnewline
                \hline 
                {\footnotesize{}$10^{2}$} & {\footnotesize{}$10^{2}$} & {\footnotesize{}-2.06E+01} & {\footnotesize{}1.73} & {\footnotesize{}1.71} & {\footnotesize{}4.80} & {\footnotesize{}16.07} & \textbf{\footnotesize{}1.23} & {\footnotesize{}2.20}\tabularnewline
                {\footnotesize{}$10^{4}$} & {\footnotesize{}$10^{4}$} & {\footnotesize{}-2.06E+03} & {\footnotesize{}1.68} & {\footnotesize{}1.91} & {\footnotesize{}4.89} & {\footnotesize{}19.67} & \textbf{\footnotesize{}0.70} & {\footnotesize{}2.35}\tabularnewline
                {\footnotesize{}$10^{6}$} & {\footnotesize{}$10^{6}$} & {\footnotesize{}-2.06E+05} & {\footnotesize{}2.06} & {\footnotesize{}2.06} & {\footnotesize{}4.73} & {\footnotesize{}16.32} & \textbf{\footnotesize{}0.57} & {\footnotesize{}2.08}\tabularnewline
                \hline 
        \end{tabular}}
        \par\end{centering}
    \caption{Runtimes for QM problems with $m=M$.}
    \label{tab:low_ratio_2}
\end{table}

\subsubsection{Support vector machine problem}

Given a pair of dimensions $(n,k)\in\n^{2}$, matrix $U\in\r^{n\times k},$
and vector $v\in\{-1,+1\}^{n},$ this sub--subsection considers the
following (sigmoid) support vector machine (SVM) problem
\[
\min_{z}\ \frac{1}{k}\sum_{i=1}^{k}\left[1-\tanh\left(v_{i}\left\langle u_{i},z\right\rangle \right)\right]+\frac{1}{2k}\|z\|^{2},
\]
where $u_{i}$ denotes the $i^{\rm th}$ column of $U$.

We now describe the experiment parameters for the instances considered.
First, the entries of $U$ are generated by sampling from the uniform
distribution ${\cal U}[0,1]$, with only 5\% of the entries being
nonzero, and $v=\mathrm{sgn}(U^{T}x)$ where the entries of $x$ are
sampled from the uniform distribution over the $k$--dimensional
ball centered at 0 with radius 50. Second, the initial starting point
is $z_{0}=0$. Third, the curvature parameters for each problem instance
are $m=M=(4\sqrt{3}\|U\|_{F}^{2})/(9k)+1/k.$ Fourth, with respect
to the termination criterion \eqref{eq:term_unconstr}, the inputs,
for every $z\in\r^{n}$, are 
\[
g(z)=\frac{1}{k}\sum_{i=1}^{k}\left[1-\tanh\left(v_{i}\left\langle u_{i},z\right\rangle \right)\right]+\frac{1}{2k}\|z\|^{2},\quad h(z)=0,\quad\hat{\rho}=10^{-3}.
\]
Fifth, the R-AIPP variants used a parameter value of $\tau=5000$.
Finally, each problem instance considered is based on a specific dimension
pair $(n,k)\in\n^{2}$.

We now present the numerical tables for this set of problem instances.

\begin{table}[H]
    \begin{centering}
        \makebox[\textwidth][c]{%
            \begin{tabular}{|>{\centering}m{0.7cm}>{\centering}m{0.7cm}|>{\centering}p{1.8cm}|>{\centering}p{1.7cm}>{\centering}p{1.7cm}>{\centering}p{1.7cm}>{\centering}p{1.7cm}>{\centering}p{1.7cm}>{\centering}p{1.7cm}|}
                \hline 
                \multirow{2}{0.7cm}{\centering{}{\footnotesize{}$n$}} & \multirow{2}{0.7cm}{\centering{}{\footnotesize{}$k$}} & \multirow{2}{1.8cm}{\centering{}{\footnotesize{}$\phi(\hat{z})$}} & \multicolumn{6}{c|}{{\small{}Iteration Count}}\tabularnewline
                &  &  & {\footnotesize{}UPFAG} & {\footnotesize{}NC-FISTA} & {\footnotesize{}AG} & {\footnotesize{}R-AIPPc} & {\footnotesize{}R-AIPPv1} & {\footnotesize{}R-AIPPv2}\tabularnewline
                \hline 
                {\footnotesize{}1000} & {\footnotesize{}500} & {\footnotesize{}2.37E--01} & {\footnotesize{}82} & {\footnotesize{}3025} & {\footnotesize{}783} & {\footnotesize{}8234} & {\footnotesize{}889} & \textbf{\footnotesize{}57}\tabularnewline
                {\footnotesize{}2000} & {\footnotesize{}1000} & {\footnotesize{}1.61E--01} & {\footnotesize{}197} & {\footnotesize{}8361} & {\footnotesize{}1192} & {\footnotesize{}22706} & {\footnotesize{}1227} & \textbf{\footnotesize{}85}\tabularnewline
                {\footnotesize{}4000} & {\footnotesize{}2000} & {\footnotesize{}1.05E--01} & {\footnotesize{}1128} & {\footnotesize{}-} & {\footnotesize{}1347} & {\footnotesize{}-} & {\footnotesize{}1651} & \textbf{\footnotesize{}97}\tabularnewline
                {\footnotesize{}8000} & {\footnotesize{}4000} & {\footnotesize{}6.67E--02} & {\footnotesize{}372} & {\footnotesize{}-} & {\footnotesize{}1647} & {\footnotesize{}-} & {\footnotesize{}-} & \textbf{\footnotesize{}148}\tabularnewline
                \hline 
        \end{tabular}}
        \par\end{centering}
    \caption{Iteration counts for SVM problems.}
\end{table}

\begin{table}[H]
    \begin{centering}
        \makebox[\textwidth][c]{%
            \begin{tabular}{|>{\centering}m{0.7cm}>{\centering}m{0.7cm}|>{\centering}p{1.8cm}|>{\centering}p{1.7cm}>{\centering}p{1.7cm}>{\centering}p{1.7cm}>{\centering}p{1.7cm}>{\centering}p{1.7cm}>{\centering}p{1.7cm}|}
                \hline 
                \multirow{2}{0.7cm}{\centering{}{\footnotesize{}$n$}} & \multirow{2}{0.7cm}{\centering{}{\footnotesize{}$k$}} & \multirow{2}{1.8cm}{\centering{}{\footnotesize{}$\phi(\hat{z})$}} & \multicolumn{6}{c|}{{\small{}Runtime (seconds)}}\tabularnewline
                &  &  & {\footnotesize{}UPFAG} & {\footnotesize{}NC-FISTA} & {\footnotesize{}AG} & {\footnotesize{}R-AIPPc} & {\footnotesize{}R-AIPPv1} & {\footnotesize{}R-AIPPv2}\tabularnewline
                \hline 
                {\footnotesize{}1000} & {\footnotesize{}500} & {\footnotesize{}2.37E--01} & {\footnotesize{}3.58} & {\footnotesize{}49.00} & {\footnotesize{}12.80} & {\footnotesize{}389.39} & {\footnotesize{}35.21} & \textbf{\footnotesize{}1.74}\tabularnewline
                {\footnotesize{}2000} & {\footnotesize{}1000} & {\footnotesize{}1.61E--01} & {\footnotesize{}29.05} & {\footnotesize{}473.67} & {\footnotesize{}65.79} & {\footnotesize{}3626.51} & {\footnotesize{}164.56} & \textbf{\footnotesize{}7.73}\tabularnewline
                {\footnotesize{}4000} & {\footnotesize{}2000} & {\footnotesize{}1.05E--01} & {\footnotesize{}1076.09} & {\footnotesize{}4000.00{*}} & {\footnotesize{}437.80} & {\footnotesize{}4000.00{*}} & {\footnotesize{}1059.98} & \textbf{\footnotesize{}47.75}\tabularnewline
                {\footnotesize{}8000} & {\footnotesize{}4000} & {\footnotesize{}6.67E--02} & {\footnotesize{}1118.84} & {\footnotesize{}4000.00{*}} & {\footnotesize{}1975.18} & {\footnotesize{}4000.00{*}} & {\footnotesize{}4000.00{*}} & \textbf{\footnotesize{}177.03}\tabularnewline
                \hline 
        \end{tabular}}
        \par\end{centering}
    \caption{Runtimes for SVM problems.}
\end{table}

\subsection{Linearly constrained problems}

This subsection examines the performance of the R-QP-AIPP method as
a nonconvex linearly--set--constrained composite optimization solver
for solving problems of the form given in \eqref{eq:main_lin_prb}.
Given a linear operator $A$, convex set $S$, function pair $(f,h)$
satisfying assumptions (C1)--(C3), tolerance pair $(\hat{\rho},\hat{\eta})\in\r_{++}^{2}$,
and an initial point $z_{0}\in\dom h$, each algorithm seeks a triple
$([\hat{z},\hat{s}],\hat{p},\hat{v})$ satisfying
\begin{gather}
\begin{gathered}\hat{v}\in\nabla f(\hat{z})+\pt h(\hat{z})+A^{*}\hat{p},\quad\frac{\|\hat{v}\|}{\|\nabla f(z_{0})\|+1}\leq\hat{\rho},\\
\|A\hat{z}-\hat{s}\|\le\hat{\eta},\quad\hat{p}\in N_{S}(\hat{s}).
\end{gathered}
\label{eq:term_lin_constr}
\end{gather}
Three problems are considered, namely: (i) the linearly--constrained
quadratic matrix problem; (ii) the sparse principal component analysis
problem in \cite{NIPS2014_5615}; and (iii) the bounded matrix completion
problem in \cite{yao2017efficient}. 

The bold numbers in each of the tables in this subsection highlight
the algorithm that performed the most efficiently in terms of iteration
count or total runtime. 

\subsubsection{Linearly--constrained quadratic matrix problem}

Given a pair of dimensions $(l,n)\in\n^{2}$, scalar pair $(\alpha_{1},\alpha_{2})\in\r_{++}^{2}$,
linear operators ${\cal A}:S_{+}^{n}\mapsto\r^{l}$ , ${\cal B}:S_{+}^{n}\mapsto\r^{n}$,
and ${\cal C}:S_{+}^{n}\mapsto\r^{l}$ defined by
\[
\left[{\cal A}(z)\right]_{i}=\left\langle A_{i},z\right\rangle _{F},\quad\left[{\cal B}(z)\right]_{j}=\left\langle B_{j},z\right\rangle _{F},\quad\left[{\cal C}(z)\right]_{i}=\left\langle C_{i},z\right\rangle _{F},
\]
for matrices $\{A_{i}\}_{i=1}^{l},\{B_{j}\}_{j=1}^{n},\{C_{i}\}_{i=1}^{l}\subseteq\r^{n\times n}$,
positive diagonal matrix $D\in\r^{n\times n}$, and vector pair $(b,d)\in\r^{l}\times\r^{l}$,
this sub--subsection considers the following linearly--constrained
quadratic matrix (LCQM) problem:
\begin{align*}
\min_{z}\  & \frac{\alpha_{1}}{2}\|{\cal C}(z)-d\|^{2}-\frac{\alpha_{2}}{2}\|D{\cal B}(z)\|^{2}\\
\text{s.t.}\  & {\cal A}(z)\in\{b\},\quad z\in P_{n},
\end{align*}
where $P_{n}=\{z\in S_{+}^{n}:\trc z=1\}$ denotes the $n$--dimensional
spectraplex.

We now describe the experiment parameters for the instances considered.
First, the dimensions were set to be $(l,n)=(50,200)$ and only 1.0\%
of the entries of the submatrices $A_{i},B_{j},$ and $C_{i}$ being
nonzero. Second, the entries of $A_{i},B_{j},C_{i},b$, and $d$ (resp.,
$D$) were generated by sampling from the uniform distribution ${\cal U}[0,1]$
(resp., ${\cal U}[1,1000]$). Third, the initial starting point $z_{0}$
was chosen to be a random point in $S_{+}^{n}$. More specifically,
three unit vectors $\nu_{1},\nu_{2},\nu_{3}\in\r^{n}$ and three scalars
$e_{1},e_{2},e_{2}\in\r_{+}$ are first generated by sampling vectors
$\widetilde{\nu}_{i}\sim{\cal U}^{n}[0,1]$ and scalars $\widetilde{d}_{i}\sim{\cal U}[0,1]$
and setting $\nu_{i}=\widetilde{\nu}_{i}/\|\widetilde{\nu}_{i}\|$ and $e_{i}=\widetilde{e}_{i}/(\sum_{j=1}^{3}\widetilde{e}_{i})$
for $i=1,2,3$. The initial iterate for the first subproblem is then
set to $z_{0}=\sum_{i=1}^{3}e_{i}\nu_{i}\nu_{i}^{T}$. Fourth, with
respect to the termination criterion \eqref{eq:term_unconstr}, the
inputs, for every $z\in S_{+}^{n}$, are 
\begin{gather*}
f(z)=\frac{\alpha_{1}}{2}\|{\cal C}(z)-d\|^{2}-\frac{\alpha_{2}}{2}\|D{\cal B}(z)\|^{2},\quad h(z)=\delta_{P_{n}}(z),\\
A(z)={\cal A}(z),\quad S=\{b\},\quad\hat{\rho}=10^{-3},\quad\hat{\eta}=10^{-3}.
\end{gather*}
Fifth, the R-AIPP variants used a parameter value of $\tau=5000$.
Finally, each problem instance considered is based on a specific curvature
pair $(m,M)\in\r_{++}^{2}$ for which the scalar pair $(\alpha_{1},\alpha_{2})\in\r_{++}^{2}$
is selected so that $M=\lambda_{\max}(\nabla^{2}f)$ and $-m=\lambda_{\min}(\nabla^{2}f)$.

We now present the numerical tables for this set of problem instances.

\begin{table}[H]
    \begin{centering}
        \makebox[\textwidth][c]{%
            \begin{tabular}{|>{\centering}m{0.7cm}>{\centering}m{0.7cm}|>{\centering}p{1.7cm}>{\centering}p{1.7cm}>{\centering}p{1.7cm}>{\centering}p{1.7cm}>{\centering}p{1.7cm}>{\centering}p{1.7cm}|}
                \hline 
                \multirow{2}{0.7cm}{\centering{}{\footnotesize{}$L$}} & \multirow{2}{0.7cm}{\centering{}{\footnotesize{}$m$}} & \multicolumn{6}{c|}{{\small{}Iteration Count}}\tabularnewline
                &  & {\footnotesize{}UPFAG} & {\footnotesize{}NC-FISTA} & {\footnotesize{}AG} & {\footnotesize{}R-AIPPc} & {\footnotesize{}R-AIPPv1} & {\footnotesize{}R-AIPPv2}\tabularnewline
                \hline 
                {\footnotesize{}$10^{1}$} & {\footnotesize{}$10^{0}$} & {\footnotesize{}2148} & {\footnotesize{}12758} & {\footnotesize{}8739} & {\footnotesize{}1797} & {\footnotesize{}1675} & \textbf{\footnotesize{}998}\tabularnewline
                {\footnotesize{}$10^{2}$} & {\footnotesize{}$10^{0}$} & {\footnotesize{}1615} & {\footnotesize{}8957} & {\footnotesize{}5253} & {\footnotesize{}1206} & \textbf{\footnotesize{}1103} & {\footnotesize{}1153}\tabularnewline
                {\footnotesize{}$10^{3}$} & {\footnotesize{}$10^{0}$} & {\footnotesize{}3967} & {\footnotesize{}26383} & {\footnotesize{}5926} & {\footnotesize{}1570} & \textbf{\footnotesize{}1489} & {\footnotesize{}1555}\tabularnewline
                \hline 
        \end{tabular}}
        \par\end{centering}
    \caption{Iteration counts for LCQM problems.}
\end{table}

\begin{table}[H]
    \begin{centering}
        \makebox[\textwidth][c]{%
            \begin{tabular}{|>{\centering}m{0.7cm}>{\centering}m{0.7cm}|>{\centering}p{1.7cm}>{\centering}p{1.7cm}>{\centering}p{1.7cm}>{\centering}p{1.7cm}>{\centering}p{1.7cm}>{\centering}p{1.7cm}|}
                \hline 
                \multirow{2}{0.7cm}{\centering{}{\footnotesize{}$L$}} & \multirow{2}{0.7cm}{\centering{}{\footnotesize{}$m$}} & \multicolumn{6}{c|}{{\small{}Runtime (seconds)}}\tabularnewline
                &  & {\footnotesize{}UPFAG} & {\footnotesize{}NC-FISTA} & {\footnotesize{}AG} & {\footnotesize{}R-AIPPc} & {\footnotesize{}R-AIPPv1} & {\footnotesize{}R-AIPPv2}\tabularnewline
                \hline 
                {\footnotesize{}$10^{1}$} & {\footnotesize{}$10^{0}$} & {\footnotesize{}274.13} & {\footnotesize{}958.47} & {\footnotesize{}883.50} & {\footnotesize{}205.48} & {\footnotesize{}192.35} & \textbf{\footnotesize{}103.60}\tabularnewline
                {\footnotesize{}$10^{2}$} & {\footnotesize{}$10^{0}$} & {\footnotesize{}218.05} & {\footnotesize{}684.10} & {\footnotesize{}531.88} & {\footnotesize{}124.45} & {\footnotesize{}117.54} & \textbf{\footnotesize{}117.32}\tabularnewline
                {\footnotesize{}$10^{3}$} & {\footnotesize{}$10^{0}$} & {\footnotesize{}481.51} & {\footnotesize{}1997.85} & {\footnotesize{}615.14} & {\footnotesize{}165.38} & \textbf{\footnotesize{}156.69} & {\footnotesize{}164.04}\tabularnewline
                \hline 
        \end{tabular}}
        \par\end{centering}
    \caption{Runtimes for LCQM problems.}
\end{table}

\subsubsection{Sparse principal component analysis problem}

Given integer $k$, positive scalar pair $(\nu,b)\in\r_{++}^{2}$,
and matrix $\Sigma\in S_{+}^{n}$, this sub--subsection considers
the following sparse principal component analysis (PCA) problem:
\begin{align*}
\min_{\Pi,\Phi}\  & \left\langle \Sigma,\Pi\right\rangle _{F}+\sum_{i,j=1}^{n}q_{\nu}(\Phi_{ij})+\nu\sum_{i,j=1}^{n}|\Phi_{ij}|\\
\text{s.t.}\  & \Pi-\Phi=0,\quad(\Pi,\Phi)\in{\cal F}^{k}\times\r^{n\times n}
\end{align*}
where ${\cal F}^{k}=\{z\in S_{+}^{n}:0\preceq z\preceq I,\trc M=k\}$
denotes the $k$--Fantope and $q_{\nu}$ is the minimax concave penalty
(MCP) function given by
\[
q_{\nu}(t):=\begin{cases}
-t^{2}/(2b), & \text{if }|t|\leq b\nu,\\
b\nu^{2}/2-\nu|t|, & \text{if }|t|>b\nu,
\end{cases}\quad\forall t\in\r.
\]

We now describe the experiment parameters for the instances considered.
First, the scalar parameters are chosen to be $(\nu,b)=(100,100,0.1)$.
Second, the matrix $\Sigma$ is generated according to an eigenvalue
decomposition $\Sigma=P\Lambda P^{T}$, based on a parameter pair
$(s,k)$, where $k$ is as in the problem description and $s$ is
a positive integer. In particular, we choose $\Lambda=(100,1,...,1)$,
the first column of $P$ to be a sparse vector whose first $s$ entries
are $1/\sqrt{s}$, and the other entries of $P$ to be sampled randomly
from the standard Gaussian distribution. Third, the initial starting
point is $(\Pi_{0},\Phi_{0})=(D_{k},0)$ where $D_{k}$ is a diagonal
matrix whose first $k$ entries are 1 and whose remaining entries
are 0. Fourth, the curvature parameters for each problem instance
are $m=M=1/b.$ Fifth, with respect to the termination criterion \eqref{eq:term_unconstr},
the inputs, for every $(\Pi,\Phi)\in S_{+}^{n}\times\r^{n\times n}$,
are
\begin{gather*}
f(\Pi,\Phi)=\left\langle \Sigma,\Pi\right\rangle _{F}+\sum_{i,j=1}^{n}q_{\nu}(\Phi_{ij}),\quad h(\Pi,\Phi)=\delta_{{\cal F}^{k}}(\Pi)+\nu\sum_{i,j=1}^{n}|\Phi_{ij}|,\\
A(\Pi,\Phi):=\Pi-\Phi,\quad S=\{0\},\quad\hat{\eta}=10^{-3},\quad\hat{\rho}=10^{-6}.
\end{gather*}
Sixth, the R-AIPP variants used a parameter value of $\tau=100000$.
Finally, each problem instance considered is based on a specific parameter
pair $(s,k)\in\n^{2}$ where $s$ is part of the process of generating
$\Sigma$ (see the second description above).

We now present the numerical tables for this set of problem instances.

\begin{table}[H]
    \begin{centering}
        \makebox[\textwidth][c]{%
            \begin{tabular}{|>{\centering}m{0.7cm}>{\centering}m{0.7cm}|>{\centering}p{1.7cm}>{\centering}p{1.7cm}>{\centering}p{1.7cm}>{\centering}p{1.7cm}>{\centering}p{1.7cm}>{\centering}p{1.7cm}|}
                \hline 
                \multirow{2}{0.7cm}{\centering{}{\footnotesize{}$s$}} & \multirow{2}{0.7cm}{\centering{}{\footnotesize{}$k$}} & \multicolumn{6}{c|}{{\small{}Iteration Count}}\tabularnewline
                &  & {\footnotesize{}UPFAG} & {\footnotesize{}NC-FISTA} & {\footnotesize{}AG} & {\footnotesize{}R-AIPPc} & {\footnotesize{}R-AIPPv1} & {\footnotesize{}R-AIPPv2}\tabularnewline
                \hline 
                {\footnotesize{}5} & {\footnotesize{}1} & {\footnotesize{}-} & {\footnotesize{}21979} & {\footnotesize{}34584} & \textbf{\footnotesize{}4511} & {\footnotesize{}5735} & {\footnotesize{}6071}\tabularnewline
                {\footnotesize{}10} & {\footnotesize{}1} & {\footnotesize{}-} & {\footnotesize{}23574} & {\footnotesize{}34712} & \textbf{\footnotesize{}4954} & {\footnotesize{}5960} & {\footnotesize{}5745}\tabularnewline
                {\footnotesize{}15} & {\footnotesize{}1} & {\footnotesize{}-} & {\footnotesize{}27944} & {\footnotesize{}32560} & \textbf{\footnotesize{}5197} & {\footnotesize{}5867} & {\footnotesize{}5822}\tabularnewline
                \hline 
        \end{tabular}}
        \par\end{centering}
    \caption{Iteration counts for sparse PCA problems.}
\end{table}

\begin{table}[H]
    \begin{centering}
        \makebox[\textwidth][c]{%
            \begin{tabular}{|>{\centering}m{0.7cm}>{\centering}m{0.7cm}|>{\centering}p{1.7cm}>{\centering}p{1.7cm}>{\centering}p{1.7cm}>{\centering}p{1.7cm}>{\centering}p{1.7cm}>{\centering}p{1.7cm}|}
                \hline 
                \multirow{2}{0.7cm}{\centering{}{\footnotesize{}$s$}} & \multirow{2}{0.7cm}{\centering{}{\footnotesize{}$k$}} & \multicolumn{6}{c|}{{\small{}Runtime (seconds)}}\tabularnewline
                &  & {\footnotesize{}UPFAG} & {\footnotesize{}NC-FISTA} & {\footnotesize{}AG} & {\footnotesize{}R-AIPPc} & {\footnotesize{}R-AIPPv1} & {\footnotesize{}R-AIPPv2}\tabularnewline
                \hline 
                {\footnotesize{}5} & {\footnotesize{}1} & {\footnotesize{}4000.00{*}} & {\footnotesize{}142.11} & {\footnotesize{}349.87} & \textbf{\footnotesize{}67.32} & {\footnotesize{}83.23} & {\footnotesize{}87.99}\tabularnewline
                {\footnotesize{}10} & {\footnotesize{}1} & {\footnotesize{}4000.00{*}} & {\footnotesize{}153.18} & {\footnotesize{}353.59} & \textbf{\footnotesize{}72.72} & {\footnotesize{}86.98} & {\footnotesize{}83.67}\tabularnewline
                {\footnotesize{}15} & {\footnotesize{}1} & {\footnotesize{}4000.00{*}} & {\footnotesize{}180.27} & {\footnotesize{}328.69} & \textbf{\footnotesize{}75.37} & {\footnotesize{}85.56} & {\footnotesize{}84.55}\tabularnewline
                \hline 
        \end{tabular}}
        \par\end{centering}
    \caption{Runtimes for sparse PCA problems.}
\end{table}

\subsubsection{Bounded matrix completion problem\label{subsec:bmc}}

Given a dimension pair $(p,q)\in\n^{2}$, positive scalar triple $(\beta,\mu,\theta)\in\r_{++}^{3}$,
scalar pair $(u,l)\in\r^{2}$, matrix $A\in\r^{p\times q}$, and indices
$\Omega$, this sub--subsection considers the following bounded matrix
completion (BMC) problem: 
\begin{align*}
\min_{X}\  & \frac{1}{2}\|P_{\Omega}(X-A)\|^{2}+\mu\sum_{i=1}^{\min\{p,q\}}\left[\kappa(\sigma_{i}(X))-\kappa_{0}\sigma_{i}(X)\right]+\bar{\mu}\|X\|_{*}\\
\text{s.t.}\  & l\leq X_{ij}\leq u\quad\forall(i,j)\in\{1,...,p\}\times\{1,...,q\},
\end{align*}
where $\|\cdot\|_{*}$ denotes the nuclear norm, the function $P_{\Omega}$
is the linear operator that zeros out any entry not in $\Omega$,
the function $\sigma_{i}(X)$ denotes the $i^{\rm th}$ largest singular
value of $X$, and 
\[
\kappa_{0}:=\frac{\beta}{\theta},\quad\bar{\mu}:=\mu\kappa_{0},\quad\kappa(t):=\beta\log\left(1+\frac{|t|}{\theta}\right)\quad\forall t\in\r.
\]

We now describe the experiment parameters for the instances considered.
First, the matrix $A$ is the user--movie ratings data matrix of
the MovieLens 100K dataset\footnote{See the MovieLens 100K dataset containing 610 users and 9724 movies,
    which is found in \href{https://grouplens.org/datasets/movielens/}{https://grouplens.org/datasets/movielens/}.}, the index set $\Omega$ is the set of nonzero entries in $A$, and
the dimension pair is set to be $(p,q)=(610,9724)$. Second, the initial
starting point was chosen to be $X_{0}=0$. Third, the curvature parameters
for each problem instance are $m=2\beta\mu/\theta^{2}$ and $M=\max\left\{ 1,m\right\} $
and the bounds are set to $(l,u)=(0,5)$. Fourth, with respect to
the termination criterion \eqref{eq:term_unconstr}, the inputs, for
every $X\in\r^{n\times n}$, are 
\begin{gather*}
f(X)=\frac{1}{2}\|P_{\Omega}(X-A)\|^{2}+\mu\sum_{i=1}^{\min\{p,q\}}\left[\kappa(\sigma_{i}(X))-\kappa_{0}\sigma_{i}(X)\right],\quad h(X)=\bar{\mu}\|X\|_{*},\\
A(X)=X,\quad S=\left\{ Z\in\r^{p\times q}:l\leq Z_{ij}\leq u,\:(i,j)\in\{1,...,p\}\times\{1,...,q\}\right\} ,\\
\hat{\eta}=10^{-2},\quad\hat{\rho}=5\times10^{-2}.
\end{gather*}
Fifth, the R-AIPP variants used a parameter value of $\tau=1000$.
Finally, each problem instance considered is based on a specific parameter
triple $(\beta,\mu,\theta)\in\r_{++}^{3}$.

We now present the numerical tables for this set of problem instances.

\begin{table}[H]
    \begin{centering}
        \makebox[\textwidth][c]{%
            \begin{tabular}{|>{\centering}m{0.7cm}>{\centering}m{0.7cm}>{\centering}m{0.7cm}|>{\centering}p{1.7cm}>{\centering}p{1.7cm}>{\centering}p{1.7cm}>{\centering}p{1.7cm}>{\centering}p{1.7cm}>{\centering}p{1.7cm}|}
                \hline 
                \multirow{2}{0.7cm}{\centering{}{\footnotesize{}$\beta$}} & \multirow{2}{0.7cm}{\centering{}{\footnotesize{}$\mu$}} & \multirow{2}{0.7cm}{\centering{}{\footnotesize{}$\theta$}} & \multicolumn{6}{c|}{{\small{}Iteration Count}}\tabularnewline
                &  &  & {\footnotesize{}UPFAG} & {\footnotesize{}NC-FISTA} & {\footnotesize{}AG} & {\footnotesize{}R-AIPPc} & {\footnotesize{}R-AIPPv1} & {\footnotesize{}R-AIPPv2}\tabularnewline
                \hline 
                {\footnotesize{}$1/2$} & {\footnotesize{}$\sqrt{2}$} & {\footnotesize{}$2$} & {\footnotesize{}73} & {\footnotesize{}-} & {\footnotesize{}229} & {\footnotesize{}21} & \textbf{\footnotesize{}16} & {\footnotesize{}131}\tabularnewline
                {\footnotesize{}$1$} & {\footnotesize{}$\sqrt{2}$} & {\footnotesize{}$2$} & {\footnotesize{}132} & {\footnotesize{}-} & {\footnotesize{}324} & {\footnotesize{}73} & {\footnotesize{}77} & \textbf{\footnotesize{}70}\tabularnewline
                {\footnotesize{}$2$} & {\footnotesize{}$\sqrt{2}$} & {\footnotesize{}$2$} & \textbf{\footnotesize{}76} & {\footnotesize{}-} & {\footnotesize{}-} & {\footnotesize{}210} & {\footnotesize{}356} & {\footnotesize{}83}\tabularnewline
                \hline 
        \end{tabular}}
        \par\end{centering}
    \caption{Iteration counts for BMC problems.}
\end{table}

\begin{table}[H]
    \begin{centering}
        \makebox[\textwidth][c]{%
            \begin{tabular}{|>{\centering}m{0.7cm}>{\centering}m{0.7cm}>{\centering}m{0.7cm}|>{\centering}p{1.7cm}>{\centering}p{1.7cm}>{\centering}p{1.7cm}>{\centering}p{1.7cm}>{\centering}p{1.7cm}>{\centering}p{1.7cm}|}
                \hline 
                \multirow{2}{0.7cm}{\centering{}{\footnotesize{}$\beta$}} & \multirow{2}{0.7cm}{\centering{}{\footnotesize{}$\mu$}} & \multirow{2}{0.7cm}{\centering{}{\footnotesize{}$\theta$}} & \multicolumn{6}{c|}{{\small{}Runtime (seconds)}}\tabularnewline
                &  &  & {\footnotesize{}UPFAG} & {\footnotesize{}NC-FISTA} & {\footnotesize{}AG} & {\footnotesize{}R-AIPPc} & {\footnotesize{}R-AIPPv1} & {\footnotesize{}R-AIPPv2}\tabularnewline
                \hline 
                {\footnotesize{}$1/2$} & {\footnotesize{}$\sqrt{2}$} & {\footnotesize{}$2$} & {\footnotesize{}1515.79} & {\footnotesize{}4000.00{*}} & {\footnotesize{}2498.02} & {\footnotesize{}283.48} & \textbf{\footnotesize{}254.04} & {\footnotesize{}1305.06}\tabularnewline
                {\footnotesize{}$1$} & {\footnotesize{}$\sqrt{2}$} & {\footnotesize{}$2$} & {\footnotesize{}2619.55} & {\footnotesize{}4000.00{*}} & {\footnotesize{}3754.03} & {\footnotesize{}881.60} & {\footnotesize{}900.00} & \textbf{\footnotesize{}801.55}\tabularnewline
                {\footnotesize{}$2$} & {\footnotesize{}$\sqrt{2}$} & {\footnotesize{}$2$} & {\footnotesize{}1938.81} & {\footnotesize{}4000.00{*}} & {\footnotesize{}4000.00{*}} & {\footnotesize{}2435.49} & {\footnotesize{}3657.56} & \textbf{\footnotesize{}943.33}\tabularnewline
                \hline 
        \end{tabular}}
        \par\end{centering}
    \caption{Runtimes for BMC problems.}
\end{table}

\subsection{Summary of the numerical experiments}

All three variants of the R-AIPP method perform well (relative to
the other methods) in the numerical experiments of this section. The
R-AIPPv2 method, in particular, is the best performing method in a
large proportion of both the unconstrained and constrained problem
instances. A potential explanation is that the stepsizes $\{\lam_{k}\}$
generated by this method may become significantly larger than the
initial stepsize parameters ${\lam_0}=1$ and ${\lam_0}=0.9/(2m)$
used in the R-AIPPv1 and R-AIPPc methods, respectively, which in view
of the third remark following Proposition~\ref{sec:background},
speeds up the convergence of the quantity $\min_{i\leq k}\|\hat{v}_{i}\|$
to zero.

Moreover, the adaptive stepsize R-AIPP variants, namely, the R-AIPPv1 and R-AIPPv2 methods, have been shown to perform well regardless of the size of the ratio $M/m$ (see, for example, Tables~\ref{tab:high_ratio_1}--\ref{tab:low_ratio_2}). This is a significant improvement over the AIPP method of \cite{WJRproxmet1} which has only been shown to perform well when the ratio $M/m$ is large (see, for example, Table~\ref{tbl:aipp_comp}).

\section{Concluding remarks} \label{sec:concl_remarks}

Observing the arguments used in the proofs of Proposition~\ref{prop:pen_termination}, Lemma~\ref{lem:poten_bd}, and Theorem~\ref{thm:rqp_aipp_compl}, it is straightforward to see that the assumption of $\dom h$ being bounded can be relaxed to assuming that the iterates $\{\hat{z}_l\}$ generated by R-QP-AIPP method of Section~\ref{sec:penalty} be bounded. Explicitly assuming that the iterates satisfy $\|\hat z_l\| \leq B$, for every $l\geq 1$ and some $B>0$, the resulting ACG iteration complexity of R-QP-AIPP method is \eqref{eq:r_qp_aipp_compl} with $Q_0$ replaced by the quantity
\[
\varphi_{c_{0}}(\hat{z}_{0})-\varphi^*_{c_{0}} + 2\left({\varphi}^*-\varphi^*_0+\hat{\rho}\left[d_0 +2B\right]+ \underline{m} \left[d_0^2 + 4B^{2}\right]\right),
\]
where $c_0$ is as in step~0 of the method, $d_0 := \inf \{ \|u - \hat z_0\| : z\in {\cal F} \}$, the quantity $\underline{m}$ is as in \eqref{eq:m_lower_def} with $g=f$, and the quantities $\hat z_0, \varphi_c,$ and $\varphi_c^*$ are from the input of the R-QP-AIPP method and \eqref{eq:pen_sub}.
It should be noted however that we were not able to show that the iterates $\{\hat z_l\}$ is bounded.
Hence, it is still an open problem to establish the iteration complexity of R-QP-AIPP when
$\dom h$ is unbounded.

Note that the description of the R-AIPP (resp. R-QP-AIPP) method of Section~\ref{sec:AIPPmet} (resp. Section~\ref{sec:penalty}) does not actually require knowledge of an upper bound $m$ on the parameter $\underline{m}$ in \eqref{eq:m_lower_def}. This is in contrast to the AIPP (resp. QP-AIPP) method of \cite{WJRproxmet1}, which requires $m$ in order to establish its validity and iteration complexity. In addition, one could consider a R-AIPP (resp. R-QP-AIPP) variant in which the quantity $M$ (resp. $L$) is adaptively inferred from its iterates rather than requiring knowledge of its value beforehand. While for the sake of brevity we omit the formal description and analysis of such a variant in this paper, we conjecture that the iteration complexity of the R-AIPP (resp. R-QP-AIPP) variant is as in \eqref{eq:r_aipp_compl} (resp. \eqref{eq:r_qp_aipp_compl}) with $M$ (resp. $L$) replaced with a quantity that lower bounds it, e.g., the maximum of the lower estimates of $M$ (resp. $L$) which are inferred by the generated iterates.

\appendix

\section{Appendix}

This appendix contains proofs and statements of several technical results used in the main body of the paper. It contains three subsections. The first subsection consists of proofs about the refinement procedure of Section~\ref{sec:background}; the second subsection consists of proofs about the R-ACG algorithm of Section~\ref{sec:Nesterov's-Method}; and the third subsection consists of technical results related to Section~\ref{sec:penalty}.

\subsection{Properties of the refinement procedure}

\begin{proof}[of Proposition~\ref{prop:approxsol}]
    It follows from \cite[Lemma 19]{WJRproxmet1} with $(f,h,L)=(f_{\lam},h_{\lam},M_{\lam})$
    that $\Delta\geq0$ and 
    \[
    M_{\lam}(z-\hat{z})\in\nabla f_{\lam}(z)+\pt h_{\lam}(\hat{z})=\lam\nabla g(z)+(z-z^{-}-v)+\lam\pt h(\hat{z}).
    \]
    Dividing by $\lam$ and rearranging terms yields 
    \[
    \frac{1}{\lam}\left[M_{\lam}(z-\hat{z})+(v+z^{-}-z)\right]-\nabla g(z)\in\pt h(\hat{z}).
    \]
    Adding $\nabla g(\hat{z})$ to both sides and using the definition of $\hat{v}$
    gives 
    \[
    \hat{v}=\frac{1}{\lam}\left[M_{\lam}(z-\hat{z})+(v+z^{-}-z)\right]+\nabla g(\hat{z})-\nabla g(z)\in\nabla g(\hat{z})+\pt h(\hat{z}),
    \]
    which is the inclusion in \eqref{eq:inclv'}.
    
    We now bound $\lam\|\hat{v}\|$. Since
    \cite[Lemma 19]{WJRproxmet1} implies that $\|z-\hat{z}\|\leq\sqrt{2M_{\lam}^{-1}\Delta}$
    and $\nabla g$ is $M$--Lipschitz continuous then 
    \begin{align*}
    \lam\|\hat{v}\| & \leq\|M_{\lam}(z-\hat{z})\|+\|v+z^{-}-z\|+\lam\|\nabla g(\hat{z})-\nabla g(z)\|\\
    & \leq\sqrt{2M_{\lam}\Delta}+\|v+z^{-}-z\|+\lam M\|\hat{z}-z\|
    \leq\sqrt{2M_{\lam}\Delta}+\|v+z^{-}-z\|+M_{\lam}\|\hat{z}-z\|\\
    & \leq\sqrt{2M_{\lam}\Delta}+\|v+z^{-}-z\|+M_{\lam}\sqrt{2M_{\lam}^{-1}\Delta}
    =\|v+z^{-}-z\|+2\sqrt{2M_{\lam}\Delta},
    \end{align*}
    which is the inequality in \eqref{eq:inclv'}.    
\end{proof}

\subsection{Properties of the R-ACG algorithm} \label{app:r_acg}

\begin{proof}[of Proposition~\ref{prop:nest_complex}(a)]
    Let $\ell$ denote the quantity in \eqref{eq:numbiter_ACG}. Assume that
    the R-ACG algorithm has performed $\ell$-iterations without declaring failure.
    In view of step~2 of the R-ACG algorithm, it follows that both \eqref{mainIneqAACG} and \eqref{crit:subIneqACG} hold for every $1\leq j\leq\ell$.
    We will show that it must stop successfully at the end of the $\ell^{\rm th}$ iteration, and hence that
    the conclusion of the lemma holds. Indeed,
    note that \eqref{increasingAj}, \eqref{eq:numbiter_ACG}, and the fact that $\log(1+t) \leq t$ for all $t\geq 0$ implies that
    \begin{equation}\label{eq:incAJprop1}
    A_\ell\geq \frac{2}{1+ 2\widetilde M}\left(1+ \frac{1}{2} \sqrt{\frac{1}{1+2\widetilde M}}\right)^{2(\ell-1)}\geq 2C > 2.
    \end{equation}
    Combining  the triangle inequality, \eqref{mainIneqAACG}, the fact that $2/A_\ell \leq 1/C$ and $(2/A_\ell)^2 < 2/A_\ell < 1$ from \eqref{eq:incAJprop1}, and  the relation $(a+b)^2\leq 2(a^2+b^2)$ for all $a, b \in \R$, we obtain 
    \begin{align*}
    \|u_\ell\|^2+2\eta_\ell
    &\leq \max\{1/A_\ell^2,1/(2A_\ell)\}(\|A_\ell u_\ell\|^{2}+4A_\ell\eta_\ell)\\
    &\leq  \max\{1/A_\ell^2,1/(2A_\ell)\}(2\|A_\ell u_\ell+x_\ell-x_0\|^{2}+2\|x_\ell-x_0\|^{2}+4A_\ell \eta_\ell)\\
    &\leq  \max\{(2/A_\ell)^2,2/A_\ell\}\|x_\ell-x_0\|^{2} \leq   \frac{1}{C}\|x_\ell-x_0\|^{2}.
    \end{align*}
    On the other hand, using the triangle inequality  and the fact that $(a+b)^2\leq (1+s)a^2+(1+1/s)b^2$ for every $(a,b,s)\in \R\times \R \times R_{++}$ (under the choice of $s=1/(\sqrt{C}-1)$), we obtain
    \begin{equation*}
    \|x_\ell-x_0\|^{2}\leq\frac{\sqrt{C}}{\sqrt{C}-1}\|x_0-x_\ell+u_\ell\|^2+\sqrt{C}\|u_\ell\|^2.
    \end{equation*} 
    Combining the previous  estimates, we then conclude that
    \begin{equation}
    \|u_\ell\|^2+2\eta_\ell \leq \frac{1}{C-\sqrt{C}}\|x_0-x_\ell+u_\ell\|^2 + \frac{1}{\sqrt{C}}\|u_\ell\|^2,
    \end{equation}
    which, after a simple algebraic manipulation, easily implies that 
    \begin{gather}
    \frac{1}{\sqrt{C}-1}\|x_{0}-x_{\ell}+u_{\ell}\|^{2}
    \geq 2\sqrt{C}\eta_{\ell}+\left(\sqrt{C}-1\right)\|u_{\ell}\|^{2}
    \geq\left(\sqrt{C}-1\right)\left(\|u_{\ell}\|^{2} + 2\eta_{\ell} \right). \label{gipp_sigma_bd} 
    \end{gather}
    Using the first term in the maximum of \eqref{eq:C_def} together with the second inequality of \eqref{gipp_sigma_bd} immediately implies that
    \eqref{ineq:ACGHPE_algo} holds with $j=\ell$.
    To show that \eqref{crit:ACGdescent_algo} holds at $j=\ell$, observe that the definition of $\psi$ in \eqref{eq:acg_defs}, 
    \eqref{crit:subIneqACG} with $j=\ell$, the second inequality of \eqref{gipp_sigma_bd}, and the second term in the maximum of \eqref{eq:C_def} imply that
    \begin{align*}
     \widetilde{\phi}(x_{0})-\widetilde{\phi}(x_{\ell}) & \geq\left\langle u_{\ell},x_{0}-x_{\ell}\right\rangle +\eta_{\ell}+\frac{1}{2}\|x_{\ell}-x_{0}\|^{2} 
     = \frac{1}{2}\left[\|x_{0}-x_{\ell}+u_{\ell}\|^{2}-\left(\|u_{\ell}\|^{2}+2\eta_{\ell}\right)\right] \\
     & \geq \frac{1}{2} \left[1 + \left(\sqrt{C}-1\right)^{-2}\right]\|x_{0}-x_{\ell}+u_{\ell}\|^{2} \geq\frac{1}{\theta}\|x_{0}-x_{\ell}+u_{\ell}\|^{2}.
    \end{align*}
\end{proof}

\subsection{Results related to Section~\ref{sec:penalty}} \label{app:penalty}

\begin{lemma}\label{lem:lsc}
    Assume that $f, h:\rn\mapsto(-\infty,\infty]$ 
    satisfy assumptions (C1) and (C3) in Section~\ref{sec:penalty}, and that,
    in addition, $f$ is lower semicontinuous on $\cl(\dom h)$. Then,
    $\varphi:=f+h$ is a  proper lower semicontinuous function which has
     a global minimum over $\R^n$.
\end{lemma}

\begin{proof}
    Suppose $\bar{z}\in\rn\backslash\cl(\dom h)$.
    Since $\cl(\dom h)$ is
    closed, there exists $\varepsilon>0$ such that $h(u) = \infty$ for every $u \in \rn\backslash\cl(\dom h)$ satisfying $\|u-\bar z\| < \varepsilon$.
    Hence, $\liminf_{u\to\bar{z}}\varphi(u)=\infty=\varphi(\bar{z})$. Now
    suppose $\bar{z}\in\cl(\dom h)$. By the lower semicontinuity of $f$
    and $h$ we have 
    \[
    \liminf_{u\to\bar{z}}\varphi(u)\geq\liminf_{u\to\bar{z}}f(u)+\liminf_{u\to\bar{z}}h(u)\geq f(\bar{z})+h(\bar{z})=\varphi(\bar{z})
    \]
    and, since $f$ is differentiable on $\dom h$, the function $\varphi$ is proper lower semicontinuous with $\dom \varphi = \dom h$. The last statement of the lemma follows from the well known fact that infimum of a lower semicontinuous function over a bounded set, namely, $\dom \varphi$, is always attained.
\end{proof}

\subsection{Comparison with the AIPP method}\label{app:comp_aipp}

This subsection presents some computational results that compare the AIPP method of \cite{WJRproxmet1} with the R-AIPPc method described at the beginning of Section~\ref{sec:numerical}. The main problem of interest for this sub-subsection is the quadratic matrix problem described in Sub-subsection~\ref{subsubsec:qm_prb}.

We now describe the particular implementation of the AIPP method used in this sub-subsection, which differs from its description in \cite{WJRproxmet1} in two ways. First, its innermost subroutine, namely, the ACG method, stops immediately when a quadruple $(\lam_k, z_k, v_k, \varepsilon_k)$ satisfying \eqref{eq:gipp_main} is found. Second, for each iteration $k$ of the method, a triple $(\hat z, \hat v, \Delta)$ is generated from the refinement procedure in Section~2 by assigning $(\hat z, \hat v, \Delta) = RP(\lam_k, z_{k-1}, z_k, v_k)$, and the method stops with the desired output when $\hat v$ satisfies condition \eqref{eq:term_unconstr}.

All experiment parameters for the R-AIPPc method and the problem instances are as described in Sub-subsection~\ref{subsubsec:qm_prb} below, while the AIPP uses a parameter input of $(\sigma,\lam)=(0.3, 1/(2m))$ for its results.

We now present the numerical tables for this set of problem instances.

\begin{table}[H]
    \begin{centering}
        \makebox[\textwidth][c]{%
            \begin{tabular}{|>{\centering}m{0.7cm}>{\centering}m{0.7cm}|>{\centering}m{1.8cm}|>{\centering}p{1.7cm}>{\centering}p{1.7cm}|>{\centering}p{1.7cm}>{\centering}p{1.7cm}|}
                \hline 
                \multirow{2}{0.7cm}{\centering{}{\footnotesize{}$M$}} & \multirow{2}{0.7cm}{\centering{}{\footnotesize{}$m$}} & \multirow{2}{1.8cm}{\centering{}{\footnotesize{}$\phi(\hat{z})$}} & \multicolumn{2}{c|}{{\small{}Iteration Count}} & \multicolumn{2}{c|}{{\footnotesize{}Runtime}}\tabularnewline
                &  &  & {\footnotesize{}AIPP} & {\footnotesize{}R-AIPPc} & {\footnotesize{}AIPP} & {\footnotesize{}R-AIPPc}\tabularnewline
                \hline 
                {\footnotesize{}$10^{1}$} & {\footnotesize{}$10^{0}$} & {\footnotesize{}-3.65E-02} & {\footnotesize{}-} & \textbf{\footnotesize{}8920} & {\footnotesize{}4000.00{*}} & \textbf{\footnotesize{}1098.14}\tabularnewline
                {\footnotesize{}$10^{2}$} & {\footnotesize{}$10^{0}$} & {\footnotesize{}-1.74E-02} & {\footnotesize{}53276} & \textbf{\footnotesize{}8317} & {\footnotesize{}3672.23} & \textbf{\footnotesize{}737.76}\tabularnewline
                {\footnotesize{}$10^{3}$} & {\footnotesize{}$10^{0}$} & {\footnotesize{}2.05E-02} & {\footnotesize{}23645} & \textbf{\footnotesize{}4424} & {\footnotesize{}1547.97} & \textbf{\footnotesize{}329.06}\tabularnewline
                {\footnotesize{}$10^{4}$} & {\footnotesize{}$10^{0}$} & {\footnotesize{}3.67E-01} & {\footnotesize{}7797} & \textbf{\footnotesize{}3250} & {\footnotesize{}505.20} & \textbf{\footnotesize{}215.69}\tabularnewline
                {\footnotesize{}$10^{5}$} & {\footnotesize{}$10^{0}$} & {\footnotesize{}3.82E+00} & {\footnotesize{}2791} & \textbf{\footnotesize{}1420} & {\footnotesize{}176.64} & \textbf{\footnotesize{}93.87}\tabularnewline
                {\footnotesize{}$10^{6}$} & {\footnotesize{}$10^{0}$} & {\footnotesize{}3.84E+01} & {\footnotesize{}3475} & \textbf{\footnotesize{}1270} & {\footnotesize{}222.26} & \textbf{\footnotesize{}84.84}\tabularnewline
                \hline 
        \end{tabular}}
        \par\end{centering}
    \caption{Iteration counts for QM problems with fixed $m$.}
    \label{tbl:aipp_comp}
\end{table}

\bibliographystyle{plain}
\bibliography{Proxacc_ref}

\end{document}